\title{On the
topological 4-genus of torus knots}
\author{S.~Baader, P.~Feller, L.~Lewark, L.~Liechti}
\thanks{The second author and the third and fourth author gratefully acknowledge support by the SNSF grants
155477 and 159208, respectively. The third author thanks the EPSRC grant EP/K00591X/1 for providing computing facilities.}
\crefname{subsection}{subsection}{subsections}
\Crefname{subsection}{Subsection}{Subsections}
\crefname{equation}{}{}
\let\cref\Cref
\newcommand{\myemail}[1]{\texttt{\href{mailto:#1}{#1}}}
\newcommand{\qua}{\hskip 0.4em \ignorespaces}
\def\arxiv#1{\relax\ifhmode\unskip\qua\fi
\href{http://arxiv.org/abs/#1}%
{\tt arXiv:\penalty -100\unskip#1}}    
\def\MR#1{\relax\ifhmode\unskip\qua\fi
\href{http://www.ams.org/mathscinet-getitem?mr=#1}{\tt MR#1}}
\def\xox#1{\csname xx#1\endcsname}
\theoremstyle{plain}
\newtheorem{theorem}{Theorem}
\newtheorem{prop}[theorem]{Proposition}
\newtheorem{lemma}[theorem]{Lemma}
\theoremstyle{remark}
\newtheorem{example}[theorem]{Example}
\newtheorem{remark}[theorem]{Remark}
\DeclareMathOperator{\rk}{rk}
\def\N{{\mathbb N}}
\begin{document}
\begin{abstract}
We prove that the topological locally flat slice genus of large torus knots takes up less than three quarters of the ordinary genus. As an application, we derive the best possible linear estimate of the topological slice genus for torus knots with non-maximal signature invariant.

\end{abstract}
\maketitle
\section{Introduction}
\label{sec1}
The Thom conjecture asserts that algebraic curves in $\mathbb{C}\mathbb{P}^2$ are genus-minimising within their homology class~\cite{KM}. More precisely, no smooth embedded surface in $\mathbb{C}\mathbb{P}^2$ has smaller genus than an  algebraic curve homologous to that surface. Regularity plays an important role here. In fact, Rudolph proved the existence of topological locally flat surfaces with strictly smaller genus than all algebraic curves homologous to it~\cite{Ru2}. A precise quantitative measure of the drop in genus for locally flat surfaces was given in~\cite{LW}. The knot theoretic version of the Thom conjecture asserts that the smooth slice genus of a positive braid knot coincides with the ordinary genus~\cite{Ru3}. Much less is known about the topological locally flat slice genus $g_4$ of positive braid knots, or even torus knots.
Positive braid knots have non-zero signature invariant $\sigma$~\cite{Ru1}, whence $g_4>0$, by the following signature bound: $|\sigma| \leq 2g_4$. This bound was proven smoothly in~\cite{murasugi}, and  for the locally flat slice genus in~\cite{KT}. Using the existence of quasipositive knots with Alexander polynomial $1$, Rudolph showed that the torus knot $T(5,6)$ has $g_4<g$, where $g$ is the classical minimal genus of knots~\cite{Ru2}. The main purpose of this paper is to show that the genus defect $\Delta g=g-g_4$ takes up a large portion of the genus for most torus knots.

\begin{theorem} \label{T1} Let $K=T(p,q)$ be a torus knot with non-maximal signature invariant, i.e.~$K \neq T(2,n)$, $T(3,4)$, $T(3,5)$. Then
$$g_4(K) \leq \frac{6}{7} \, g(K).$$
\end{theorem}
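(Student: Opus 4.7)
The plan is to combine a subsurface cobordism argument with Freedman's topological sliceness theorem for Alexander polynomial one knots. Let $\Sigma$ denote the Bennequin (fibre) surface of $K = T(p,q)$, a Seifert surface of genus $g = (p-1)(q-1)/2$. Suppose one can exhibit a subsurface $F \subset \mathrm{int}(\Sigma)$ with connected boundary $\gamma = \partial F$ such that $\gamma$, viewed as a knot $K'$ in $S^3$, has Alexander polynomial one. Then $K'$ bounds a topological locally flat disk in $B^4$ by Freedman's theorem; combining this disk (placed inside a small ball in $B^4$) with the cobordism $\Sigma \setminus \mathrm{int}(F)$ from $K$ to $K'$ of genus $g - g(F)$ yields a topological locally flat surface in $B^4$ bounded by $K$ of genus $g - g(F)$. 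Consequently
\[ g_4(K) \leq g - g(F), \]
and the theorem reduces to producing $F$ with $g(F) \geq g/7$.

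Finding such $F$ is a concrete linear-algebra problem on the Seifert form. The Bennequin surface of $T(p,q)$ has a standard basis of $H_1(\Sigma) \cong \Z^{2g}$ indexed by the bounded complementary regions of the standard positive braid diagram, in which the Seifert matrix $V$ has a combinatorially explicit description. A subsurface $F \subset \Sigma$ of genus $h$ with connected boundary corresponds to a rank-$2h$ primitive sublattice $L \subset H_1(\Sigma)$ on which the intersection form $V - V^T$ restricts to a unimodular symplectic form; the Alexander polynomial of its boundary is then $\det(V' - t(V')^T)$, where $V'$ is the corresponding submatrix of $V$. The goal is to find, for each non-exceptional torus knot, such an $L$ of rank at least $\lceil 2g/7 \rceil$ with $\det(V' - t(V')^T) = \pm 1$. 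A natural strategy is to handle small cases $T(3,q), T(4,q), T(5,q),\ldots$ explicitly, then bootstrap to larger torus knots via cabling, stabilisation, or connected-sum style arguments.

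The exceptional list $T(2,n), T(3,4), T(3,5)$ is forced: these are exactly the torus knots with $|\sigma| = 2g$, for which the signature bound $|\sigma| \leq 2g_4$ recalled in the introduction forces $g_4 = g$, precluding any estimate of the form $g_4 \leq cg$ with $c<1$. For the remaining torus knots $g - |\sigma|/2$ is a definite positive fraction of $g$, and this algebraic slack is what one must turn into a geometric subsurface with Alexander-polynomial-one boundary.

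The main obstacle is the explicit construction of the sublattice. Although finding primitive metabolisers of the Seifert form is a classical technique, the Alexander polynomial one condition is strictly stronger than admitting a metaboliser, and it must be engineered uniformly across all non-exceptional torus knots with rank scaling like $2g/7$. A second subtlety is realising the chosen sublattice geometrically as an embedded subsurface with connected boundary, typically accomplished via Hopf plumbings or direct curves-on-a-surface arguments. The ratio $\tfrac{6}{7}$ presumably reflects the worst case among the smallest non-exceptional torus knots, most likely $T(3,7)$, so getting the bound tight there and then propagating it is where I expect most of the work to lie.
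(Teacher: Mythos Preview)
Your outline matches the paper's strategy almost exactly: the paper's Proposition~3 is precisely the statement that an Alexander-trivial sublattice $V\subset H_1(\Sigma)$ of rank $2k$ yields $g_4\le g-k$, via Freedman. But what you have written is a plan, not a proof. You explicitly flag the construction of the sublattice as ``the main obstacle'' and leave it open; in the paper this construction \emph{is} the entire content of Section~4, and it is not soft.

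Concretely, the paper's execution differs from your sketch in two ways you should know about. First, the base cases are not just $T(3,q),T(4,q),T(5,q)$ for a few $q$: they tabulate \emph{all} torus links with $b_1\le 64$ and produce explicit Alexander-trivial subgroups (many found by a randomised computer search on the Seifert matrix). Second, the bootstrapping is not connect-sum or cabling in the usual sense. The key inheritance lemma is that $\Sigma(T(pq,r))$ contains $\Sigma(T(p,qr))$ as an incompressible subsurface; combined with the trivial containment $\Sigma(T(p,q))\supset\Sigma(T(p',q))$ for $p'\le p$, this lets them write any $p$ not divisible by $3,4,5$ as $p=3a+4b$ and add the defects of $T(3a,q)$ and $T(4b,q)$. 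A careful arithmetic estimate (Lemma~\ref{lemma345} in the paper) then closes the inequality for all $q\ge 10$.

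One small correction: the sharp case is $T(3,8)$ with $g_4=6$, $g=7$, not $T(3,7)$.
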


This result is sharp, since the torus knot $T(3,8)$ has $g_4=6$ and $g=7$. However, a larger genus defect is attained for torus knots with large parameters $p,q \in \N$. The classical genus formula $g(T(p,q))=\frac{1}{2}(p-1)(q-1)$ yields
$$\lim_{p,q \to \infty} \frac{2}{pq} g(T(p,q))=1.$$
Here the limit is understood as $\lim \min\{p,q\} \to \infty$ (i.e. both parameters must be taken to infinity). As we will see, the corresponding limit for $g_4$ drops by at least one
quarter. The existence of this limit follows from the subadditivity of the function $g_4(T(p,q))$ in both parameters (see the proof of Proposition~9 in the Appendix of \cite{Li} for the one-variable case known as Fekete's Lemma; the two-variable case follows from an analogous estimate between the ratios $\frac{g_4(T(p,q))}{pq}$ and  $\frac{g_4(T(N,N))}{N^2}$, where $p=aN+b$ and $q=cN+d$).

\begin{theorem} \label{T2} $$\lim_{p,q \to \infty} \frac{2}{pq} g_4(T(p,q)) < \frac{3}{4}.$$
\end{theorem}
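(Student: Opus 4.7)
The plan is to combine Theorem~\ref{T1} with the two-variable subadditivity of $g_4(T(\cdot,\cdot))$ invoked just before the statement. A standard Fekete-type argument identifies the limit with the infimum of the ratios $\frac{2g_4(T(p,q))}{pq}$; in particular, the limit is bounded above by $\frac{2g_4(T(p_0,q_0))}{p_0q_0}$ for every coprime pair $(p_0,q_0)$. So it suffices to exhibit a single coprime pair whose ratio falls strictly below $\frac{3}{4}$.

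To produce such a pair, I would apply Theorem~\ref{T1} to a torus knot $T(p_0,q_0)$ outside the exception list $\{T(2,n),\, T(3,4),\, T(3,5)\}$. The theorem provides
$$\frac{2g_4(T(p_0,q_0))}{p_0q_0} \;\leq\; \frac{6}{7}\cdot\frac{(p_0-1)(q_0-1)}{p_0q_0},$$
and an elementary computation shows that the right-hand side is strictly less than $\frac{3}{4}$ exactly when $\frac{1}{p_0}+\frac{1}{q_0}-\frac{1}{p_0q_0} > \frac{1}{8}$. This condition is satisfied by many small coprime pairs, for instance $(7,10)$ or $(11,12)$, any one of which witnesses the desired bound.

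The main obstacle is not this elementary calculation but making the subadditivity reduction precise in two variables, along the lines hinted at in the paragraph preceding Theorem~\ref{T2}. One needs to construct, for arbitrary large coprime $(p,q)$, a topologically locally flat slice surface for $T(p,q)$ built from roughly $\lceil p/p_0\rceil\lceil q/q_0\rceil$ copies of an optimal surface for $T(p_0,q_0)$, with controlled boundary corrections to preserve the knot condition. Once this tiling/cobordism estimate is in place, dividing by $pq$ and letting $\min(p,q)\to\infty$ yields $\lim \frac{2g_4(T(p,q))}{pq}\leq \frac{2g_4(T(p_0,q_0))}{p_0q_0}<\frac{3}{4}$, which is exactly Theorem~\ref{T2}.
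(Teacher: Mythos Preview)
Your strategy inverts the paper's logical order: the introduction states that Theorem~\ref{T2} is established first (in Section~\ref{sec3}) precisely because it implies Theorem~\ref{T1} for all but finitely many braid indices. Deducing Theorem~\ref{T2} from Theorem~\ref{T1} therefore appears circular. As it happens, the proof of Theorem~\ref{T1} actually given in Section~\ref{sec4} (via Proposition~\ref{prop1} and Lemma~\ref{lemma345}) does not cite Theorem~\ref{T2}, so one could in principle reorder the arguments---but you do not address this point, and invoking the full strength of Theorem~\ref{T1} is in any case far heavier than what you need.

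The paper's proof of Theorem~\ref{T2} takes a completely different route. Rather than a Fekete reduction to a single example, it exhibits large genus defect directly in the sequence $T(9n,9n)$. The combinatorial Lemma~\ref{subsurface} (applied with $\ell=5$) locates inside $\Sigma(\Delta_{9n})$ a disjoint union of incompressible subsurfaces of the form $\Sigma(\Omega_5^k)$, and the computer-aided Example~\ref{langerwurm} (built on Lemma~\ref{zusammenstecken}) shows that each such piece carries defect roughly $\tfrac{8k}{7}$. Summing over the pieces gives $\Delta g(T(9n,9n)) \geq \tfrac{36}{7}n^2 + O(n)$ and hence the explicit constant $\lim \leq \tfrac{47}{63} < \tfrac{3}{4}$.

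Your underlying Fekete idea is correct and, once the two-variable subadditivity is spelled out, would yield a shorter argument---but it needs neither Theorem~\ref{T1} nor a pair as large as $(7,10)$. A single computed value such as $g_4(T(3,7))=5$ from Example~\ref{t37} already gives $\lim \leq \tfrac{10}{21}$, which is sharper than the paper's $\tfrac{47}{63}$ and sidesteps the circularity entirely.
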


To the best of our knowledge, no attempt at determining the actual limit has been made so far.
The signature bound $|\sigma| \leq 2g_4$ potentially allows a drop down to one-half, since
$$\lim_{p,q \to \infty} \frac{1}{pq} \sigma(T(p,q))=\frac{1}{2}.$$
The latter is an easy consequence of the signature formula for torus knots by Gordon, Litherland and Murasugi~\cite{GLM}.

We will prove \cref{T1,T2} in \cref{sec4,sec3}, respectively. The reason for the reverse order is simple: \cref{T2} implies \cref{T1}, up to finitely many values of the braid index $\min\{p,q\}$, since $\frac{3}{4}<\frac{6}{7}$.  The main tool for proving \cref{T2} is a homological improvement of Rudolph's method, which we will explain in \cref{sec2}.

The strength of this method is demonstrated in \cref{tree-limit}, which provides a sharp estimate of the topological slice genus for positive fibred arborescent links. In particular, we find prime positive braid links of arbitrarily large genus with $\frac{g_4}{g}=\frac{1}{2}$.

\medskip
\paragraph{\emph{Acknowledgements:}}
We warmly thank Maciej Borodzik and Filip Misev for helpful comments and inspiring discussions.

\section{Construction of locally flat surfaces}
\label{sec2}
Let us first briefly fix notation and conventions.
We assume all Seifert surfaces to be connected.
The genus $g(L)$ and Betti number $b_1(L)$ of a link $L$ are the minimal genus and Betti number of a Seifert surface of $L$, respectively.
Homology groups are considered over the integers.
The topological slice genus $g_4(L)$ is the minimal genus of a \emph{slice surface} of $L$, i.e.~of a
connected oriented compact surface, properly and locally flatly embedded into the 4-ball, whose boundary is $L$.
For any surface $\Sigma$, a subsurface $\Sigma' \subset \Sigma$ is simply a surface contained in $\Sigma$,
assuming neither that $\Sigma'$ is connected, nor that it is embedded properly into $\Sigma$.
We write $a_1, \ldots, a_{n-1}$ for the standard generators of the braid group on $n$ strands.
If a braid is given by a braid word $\beta$, we write $\widehat{\beta}$ for its closure.
A non-split braid word $\beta$ yields a canonical Seifert surface for $\widehat{\beta}$, which we denote
by $\Sigma(\beta)$. If, in addition, $\beta$ is positive, $\Sigma(\beta)$
is in fact the fibre surface of $\widehat{\beta}$ \cite{stallings}.
The Alexander polynomial of a bilinear integral form represented by a matrix $M$
is $\det (t\cdot M - M^{\top})\in\mathbb{Z}[t^{\pm}]$; this does not depend on the chosen matrix, and
is considered up to multiplication with a unit.

Our main tool uses Freedman's celebrated result \cite{freedman,fq} to construct
slice surfaces of lower genus from Seifert surfaces by ambient surgery.
See \cite{Fe,BaLe,FeMc} for other applications of this method.
Here we prove a version for multi-component links.
\begin{prop}\label{maintool}
Let $L$ be a link with a Seifert surface $\Sigma$. %
Let $V\subset H_1(\Sigma)$ be a subgroup. %
If the Seifert form of $\Sigma$ restricted to $V$ has Alexander polynomial $1$,
then $L$ has a slice surface of genus $g(\Sigma) - \rk V / 2$.
\end{prop}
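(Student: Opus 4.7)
The plan is to realize $V$ geometrically as the first homology of a subsurface $F\subset\Sigma$ of genus $\rk V/2$ with one boundary circle $K$, and then to apply Freedman's theorem and perform ambient surgery to replace $F$ by a locally flat disk in $B^4$.

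First I would extract two algebraic consequences of the hypothesis. Let $M$ be a Seifert matrix of the restriction of the Seifert form to $V$. Setting $t=1$ in $\det(tM-M^{\top})$ yields $\det(M-M^{\top})=\pm 1$, so the intersection form of $\Sigma$ restricts to a unimodular bilinear form on $V$; in particular, $\rk V=2g$ is even, and $V$ carries a symplectic basis $\{x_1,y_1,\dots,x_g,y_g\}$. Moreover, $V$ must be a primitive sublattice of $H_1(\Sigma)$: an index-$n$ inclusion into its saturation would conjugate the Seifert matrix by a matrix of determinant $\pm n$, multiplying the Alexander polynomial by $n^2$, which is incompatible with it being a unit.

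The second step is to realize this symplectic basis geometrically. Since each $x_i,y_i$ is primitive in $H_1(\Sigma)$, it is represented by an embedded simple closed curve on $\Sigma$. Classical surface topology allows one to isotope these representatives so that $x_i$ and $y_i$ meet transversally in a single point while all other pairs are disjoint. A regular neighborhood $F$ of the resulting configuration is a connected subsurface of $\Sigma$ of genus $g$ with a single boundary circle $K$, and the inclusion induces an isomorphism $H_1(F)\xrightarrow{\sim}V$. Viewing $F$ as a Seifert surface for $K\subset S^3$, its Seifert form coincides with the restriction of the Seifert form of $\Sigma$ to $V$, so $K$ has Alexander polynomial $1$. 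By Freedman's theorem \cite{freedman,fq}, $K$ bounds a locally flat embedded disk $D\subset B^4$.

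It remains to perform the ambient surgery. With $\Sigma$ kept on $S^3=\partial B^4$ and $D$ chosen so that $\mathrm{int}(D)\subset\mathrm{int}(B^4)$, the pieces $\Sigma\setminus F$ and $D$ intersect only along $K$. A standard smoothing of the corner along $K$ turns the union $\Sigma':=(\Sigma\setminus F)\cup D\subset B^4$ into a properly and locally flatly embedded slice surface for $L$. Since $K$ separates $\Sigma$, the complement $\Sigma\setminus F$ is connected, and attaching the disk preserves connectedness; an Euler-characteristic computation then gives $g(\Sigma')=g(\Sigma)-g=g(\Sigma)-\rk V/2$. Because the argument never touches $\partial\Sigma=L$, it applies uniformly to links of any number of components.

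I expect the main obstacle to be the geometric realization in the second step: turning the algebraic datum of a symplectic basis of $V\subset H_1(\Sigma)$ into an embedded configuration of curves on $\Sigma$ with the prescribed pairwise intersection pattern. While each ingredient---representability of primitive classes by embedded curves, and minimization of geometric intersections by isotopy---is classical, their simultaneous execution for a symplectic basis of a proper sub-summand requires care, and is precisely what distinguishes this \emph{homological} version from the more classical formulation of Rudolph's method starting with an \emph{a priori} given subsurface.
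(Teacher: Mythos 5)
Your overall strategy is the same as the paper's: find an incompressible genus-$(\rk V/2)$ subsurface of $\Sigma$ with connected boundary whose first homology maps onto $V$, observe that its boundary knot has Alexander polynomial $1$, cap it with a Freedman disc, and count Euler characteristics (this last cut-and-paste step is the paper's \cref{inherit}). Your algebraic preliminaries are also fine: unimodularity of $B-B^{\top}$ at $t=1$, evenness of $\rk V$, and your primitivity argument for $V$ is correct (and a nice observation, though the paper does not need it in this form).

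The genuine gap is exactly where you suspect it, and it is not merely a matter of "care": the assertion that, having chosen simple closed curves representing the classes of a symplectic basis of $V$, one can \emph{isotope} them so that $x_i$ meets $y_i$ once and all other pairs are disjoint, is false as stated. Geometric intersection numbers of simple closed curves generally exceed algebraic ones and cannot be reduced by isotopy (already two curves with algebraic intersection number $0$ may fill the surface); so the representatives must be chosen anew, simultaneously, and the existence of such a geometric realisation of a symplectic basis of a proper subgroup is precisely the point that requires proof. The paper supplies it as follows: using that the intersection form is unimodular on $V$ and vanishes on the boundary classes, it extends the symplectic basis of $V$ to a basis of $H_1(\Sigma)$ of standard shape, and then invokes \cref{mpc} --- an automorphism of $H_1(\Sigma)$ preserving the intersection form and permuting the boundary classes is induced by a diffeomorphism of $\Sigma$ --- to carry a standard geometric basis (and a separating curve enclosing its first $k$ handles) onto a configuration spanning $V$. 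This mapping class group statement for surfaces with \emph{several} boundary components is the key lemma your sketch is missing; the classical surjection of the mapping class group onto the symplectic group covers only closed surfaces or one boundary component, whereas Seifert surfaces of links have many, and the extra Dehn-twist argument of \cref{mpc} (realising transvections by boundary classes) is what bridges that difference. With that lemma in hand, your argument closes up and coincides with the paper's proof.
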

We will call such a subgroup $V$ \emph{Alexander-trivial}.
Before the proof, let us show a sample application.
\begin{figure}[ht]
\includegraphics[width=\textwidth]{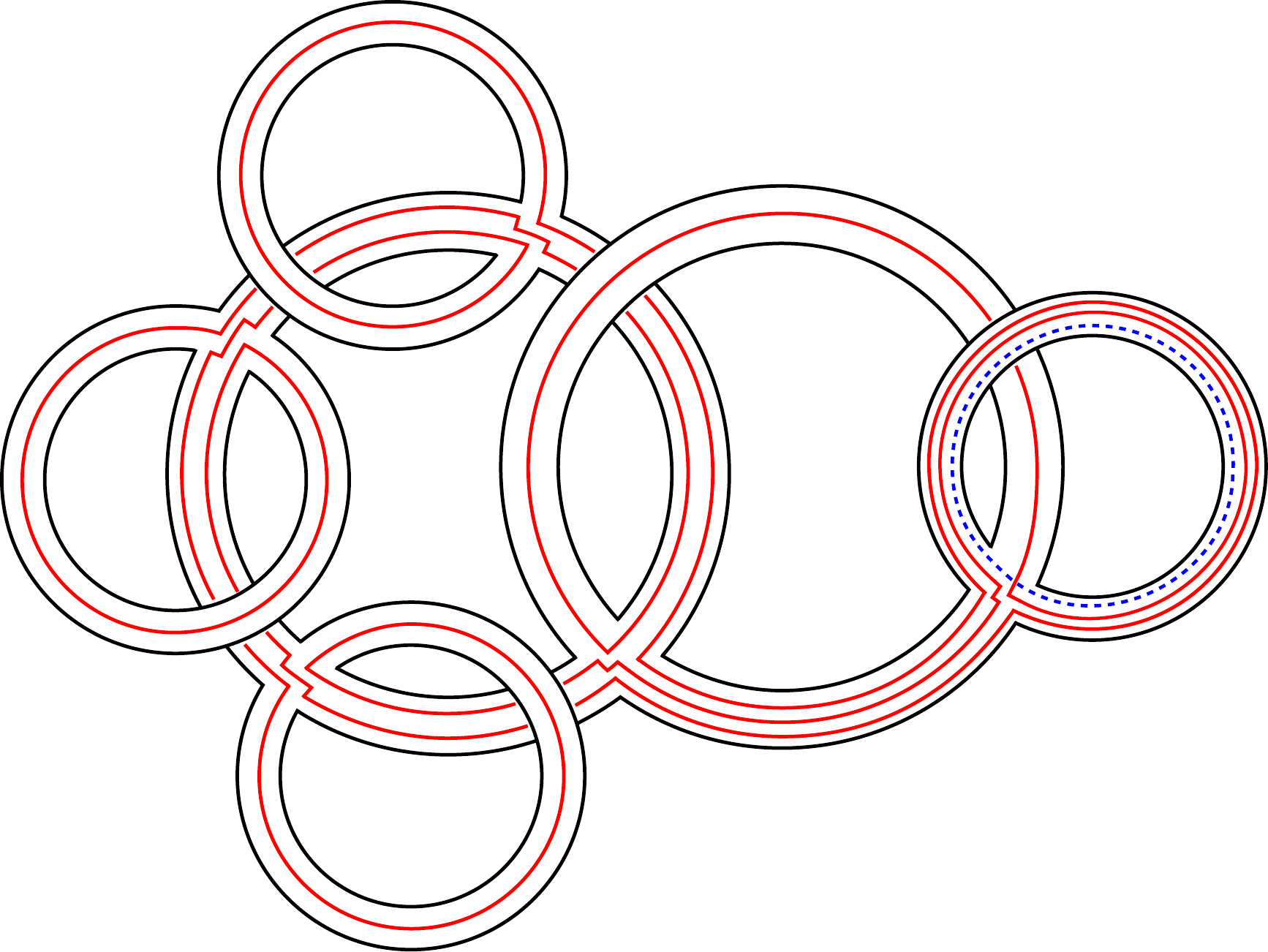}
\caption{The surface $\widetilde{X}$, which is a plumbing of six positive Hopf bands.
    For simplicity, the twists of the individual Hopf bands are not drawn.
Two curves representing homology classes of interest are drawn in red and dashed blue --
see \cref{tildeX} for details.}
\label{figX}
\end{figure}
\begin{example}
\label{tildeX}
The link $L$ given as the closure of the positive $4$-braid $a_1a_3a_2^2a_1a_3a_2^3$ has topological slice genus one. Calculating the signature yields $1=\frac{|\sigma(L)|-2}{2}\leq g_4(L)$ by the bound provided in~\cite{KT}. We now show that $L$ has a genus one slice surface. For this,
let $\widetilde X$ be the canonical fibre surface $\Sigma(a_1a_3a_2^2a_1a_3a_2^3)$.
Observe that $\widetilde X$ is a plumbing of six positive Hopf bands along an X-shaped tree,
as shown in \cref{figX}.  Here we use the fact that fibre surfaces with the same boundary link are isotopic.
The two additional simple closed curves in the figure (red and dashed blue) represent homology classes $[\gamma_1]$ and $[\gamma_2]$ in $H_1(\widetilde X)$.
We claim that the subspace $V$ generated by $[\gamma_1]$ and $[\gamma_2]$ is Alexander-trivial.
A matrix for the Seifert form of the boundary link is given by the $6\times6$ matrix $A$, where
$$A_{ii}=A_{12}=A_{23}=A_{43}=A_{53}=A_{63}=1$$
and $A_{ij} = 0$ otherwise.
In the chosen basis, $[\gamma_1]$ and $[\gamma_2]$ are represented by the vectors $(0,1,-2,1,1,1)^{\top}$ and $(1,0,0,0,0,0)^{\top}$, respectively.
A direct computation yields
\begin{align*}
 [\gamma_1]^{\top}A[\gamma_1] &= [\gamma_1]^{\top}A[\gamma_2] = 0,\\
 [\gamma_2]^{\top}A[\gamma_2] &= [\gamma_2]^{\top}A[\gamma_1] = 1,
\end{align*}
so a matrix $B$ of the Seifert form restricted to $V$ is given by
$$B=
\begin{pmatrix}
 0 & 0\\
 1 & 1
\end{pmatrix}.$$
Indeed, we now have $\det(t\cdot B - B^{\top})=t$,
which is a unit in $\mathbb{Z}[t^{\pm 1}]$. Thus, by \cref{maintool}, the boundary link $\partial\widetilde X$ possesses a slice surface of genus $g(\widetilde X) -1=1$.
Geometrically, what happens if we apply \cref{maintool} is the following:
starting from $\widetilde X$ we cut out the punctured torus $T$ defined by the union of the thickened red and dashed blue curves.
Then, using Freedman's disc theorem, we reglue a disc whose interior lies in the $4$-dimensional unit ball along $\partial T$, obtaining a slice surface with smaller genus than $\widetilde X$.
For this we use that $\partial T$ is a knot with Alexander polynomial $1$.
\end{example}
Let us now turn to the proof of \cref{maintool}.
A crucial ingredient is the following fact about the mapping class group of surfaces,
which is well-known for surfaces with at most one boundary component (see e.g.~\cite{BeMa}).
\begin{lemma}\label{mpc}
Let $\Sigma$ be a connected oriented compact surface of genus $g$ with $n$ boundary components.
An automorphism $\varphi$ of $H_1(\Sigma)$ is induced by an orientation-preserving diffeomorphism $\widetilde{\varphi}$ of $\Sigma$
if and only if $\varphi$ preserves the intersection form of $\Sigma$ and permutes the homology classes
of the boundary curves. %

\end{lemma}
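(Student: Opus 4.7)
The only-if direction is immediate: an orientation-preserving diffeomorphism induces an automorphism of $H_1(\Sigma)$ that preserves the algebraic intersection pairing and permutes the boundary components of $\Sigma$, hence the associated homology classes $[b_1],\ldots,[b_n]\in H_1(\Sigma)$. These classes span the radical $R$ of the intersection form on $H_1(\Sigma)$; for $n\geq 1$, $R$ has rank $n-1$, with sole relation $\sum_i[b_i]=0$.

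The plan for the converse is to reduce to the known case $n\leq 1$ by capping off boundary components with discs. First, let $\pi$ be the permutation of $\{[b_1],\ldots,[b_n]\}$ induced by $\varphi$. By the change-of-coordinates principle for surfaces, every permutation of the boundary components is realised by an orientation-preserving self-diffeomorphism $\psi_\pi$ of $\Sigma$; replacing $\varphi$ by $\varphi\circ(\psi_\pi)_*^{-1}$, we may assume $\varphi$ fixes each class $[b_i]$ individually.

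Next, cap off all boundary components to obtain a closed surface $\Sigma_c$ of genus $g$, so that the resulting surjection $H_1(\Sigma)\twoheadrightarrow H_1(\Sigma_c)$ has kernel $R$. Since $\varphi$ preserves the intersection form and fixes $R$ pointwise, it descends to an automorphism $\bar\varphi$ of $H_1(\Sigma_c)$ preserving the now-nondegenerate pairing. The known closed case supplies an orientation-preserving diffeomorphism $\bar\psi$ of $\Sigma_c$ inducing $\bar\varphi$, which we isotope to fix a neighbourhood of each cap centre pointwise (this does not change the action on $H_1(\Sigma_c)$). Removing the open discs yields a diffeomorphism $\widetilde\varphi_0$ of $\Sigma$, fixing each boundary component setwise, whose induced action $(\widetilde\varphi_0)_*$ agrees with $\varphi$ modulo $R$.

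To finish, one must correct the error $\delta=\varphi-(\widetilde\varphi_0)_*\colon H_1(\Sigma)\to R$ by further composing with boundary-pushing diffeomorphisms: pushing $b_i$ along a loop $\gamma$ acts on $H_1(\Sigma)$ by $\alpha\mapsto \alpha+\langle\alpha,\gamma\rangle[b_i]$ while fixing all boundary classes. I expect this last step to be the main obstacle: it requires a Birman-type argument to show that every $\delta$ compatible with the hypotheses of the lemma (intersection-form-preservation and fixing each $[b_i]$) is a composition of such boundary transvections. A direct linear-algebra computation, exploiting nondegeneracy of the intersection pairing on the quotient $H_1(\Sigma)/R$ together with the constraint that $\varphi$ fix each $[b_i]$, should identify the admissible errors $\delta$ precisely with the image of the point-pushing subgroup, completing the reduction to the $n\leq 1$ case.
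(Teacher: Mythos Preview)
Your approach is correct and follows a different route from the paper's, though the underlying decomposition is the same: permutation of boundary classes, symplectic block, and an $R$-valued correction. You cap off and invoke the closed case, then correct via boundary-pushing; the paper stays on $\Sigma$ throughout, embeds a one-boundary subsurface to realise the symplectic block, and produces each elementary correction (in the paper's notation, $[\gamma_{i,j}]\mapsto[\gamma_{i,j}]+[\delta_k]$ with $\delta_k$ a boundary curve) explicitly as a product of two Dehn twists---along a curve $\zeta$ with $[\zeta]=[\gamma_{i,3-j}]+[\delta_k]$, then along $\gamma_{i,3-j}$---which is exactly your boundary push written out concretely. Your worry about the last step is unfounded: since the error term lands in the radical, intersection-form preservation imposes nothing beyond vanishing on $R$, so the admissible errors are all of $\operatorname{Hom}(H_1(\Sigma_c),R)$; the push-transvections $\alpha\mapsto\langle\alpha,\gamma\rangle[b_i]$ compose additively (the cross-term $\delta_1\circ\delta_2$ vanishes) and, by unimodularity of the pairing on $H_1(\Sigma_c)$, generate this whole group as $\gamma$ and $i\in\{1,\ldots,n-1\}$ vary. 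The paper's route avoids the capping and the isotopy of $\bar\psi$ to fix marked points; yours makes the Birman-sequence structure behind the correction step transparent.
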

\begin{proof}
Clearly every orientation-preserving diffeomorphism preserves the intersection form and maps boundary curves to boundary curves (preserving the orientations), which induces a permutation of the corresponding homology classes. %

Now let us prove that the conditions are sufficient.
Let
\[
\gamma_{1,1}, \gamma_{2,1}, \ldots, \gamma_{g,1}, \gamma_{1,2}, \ldots, \gamma_{g,2}, \delta_1, \ldots, \delta_{n-1}
\]
be a \emph{geometric basis} on $\Sigma$ (a term taken from \cite{TeGa}); that is, the $\delta_i$ are
boundary curves, $\gamma_{i,j}$ intersects $\gamma_{i,3-j}$ once (geometrically), and there are
no other geometric intersections between any of these curves.
The homology classes of these curves then form a basis of $H_1(\Sigma)$.

One easily finds a simple closed curve $\zeta\subset\Sigma$ with the following properties:
it intersects $\gamma_{1,1}$ once, does not intersect any other curve in the geometric basis,
and $[\zeta] = [\gamma_{1,2}] + [\delta_1]$. So the only basis curve affected by a Dehn twist along $\zeta$ is
$\gamma_{1,1}$, whose homology class is sent to $[\gamma_{1,1}] + [\gamma_{1,2}] + [\delta_1]$. Composing with another Dehn
twist along $\gamma_{1,2}$, one finds a diffeomorphism that sends $[\gamma_{1,1}]\mapsto [\gamma_{1,1}] + [\delta_1]$.
Similarly, for all $i\in\{1,\ldots,g\}, j\in\{1,2\}, k\in\{1,\ldots, n-1\}$, there is a diffeomorphism
sending $[\gamma_{i,j}]\mapsto [\gamma_{i,j}] + [\delta_k]$. Composing these diffeomorphisms, one may realise automorphisms
of $H_1(\Sigma)$ with a matrix of the following kind:
\[
\left(
\begin{array}{c|c}
\mathbbm{1} & 0 \\\hline
M &  \mathbbm{1}
\end{array}\right),
\]
where $M$ is an arbitrary $(n-1) \times 2g$ matrix.
Next we make use of the fact that for a surface $\Sigma'$ of genus $g$ with one boundary component,
the mapping class group surjects onto the symplectic group; see e.g.~\cite{BeMa},
where this is established for closed surfaces, which essentially implies the result for surfaces with one boundary component.
Since $\Sigma$ contains $\Sigma'$ as a subsurface, the following matrices may be realised as orientation-preserving diffeomorphisms:
\[
\left(
\begin{array}{c|c}
X & 0 \\\hline
0 & \mathbbm{1}
\end{array}\right),
\]
where $X$ is symplectic.
Finally, it is easy to see that boundary curves may be permuted (though by diffeomorphisms not coming from Dehn twists).
So, composing, one may realise any matrix of the form
\[
\left(
\begin{array}{c|c}
X & 0 \\\hline
M & P
\end{array}\right),
\]
where $X$ is symplectic, $M$ is arbitrary, and $P$ is a permutation matrix.
This completes the proof since such matrices are precisely those which represent an automorphism of
$H_1(\Sigma)$ that preserves the intersection form and permutes the homology classes of the boundary.
\end{proof}
A Seifert surface $\Sigma$ may inherit genus defect from an \emph{incompressible} subsurface,
i.e.~a subsurface $\Sigma'\subset\Sigma$ such that the induced map on the first homology group is injective.
\begin{lemma} \label{inherit}
Let $\Sigma$ be a Seifert surface of a link $L$, and let $\Sigma'\subset \Sigma$ be an incompressible subsurface
with boundary link $L'$.
If $L'$ bounds a slice surface $S'$, then $L$ bounds a slice
surface $S$ of genus $g(\Sigma) - g(\Sigma') + g(S')$.
In particular, if $g(\Sigma) = g(L)$, then $\Delta g (L) \geq \Delta g (L')$.
\end{lemma}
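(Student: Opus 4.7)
The plan is to carry out an explicit ambient surgery in $B^4$, excising a pushed-in copy of $\Sigma'$ and filling in $S'$.

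I would first isotope $\Sigma'$ inside $\Sigma$ so that $L'=\partial\Sigma'$ lies in the interior of $\Sigma$, disjoint from $L$. Using a collar $S^3\times[0,1]\hookrightarrow B^4$ of $S^3=\partial B^4$, I would push $\Sigma\setminus\operatorname{int}(\Sigma')$ into the subcollar $S^3\times[0,1/2]$ with $L$ staying at level $0$ and $L'$ sliding to level $1/2$. The complementary region $B^4\setminus(S^3\times[0,1/2))$ is a topological $4$-ball with boundary sphere $S^3\times\{1/2\}$; in it I would embed a properly, locally flat copy of $S'$ with boundary $L'$ on $S^3\times\{1/2\}$. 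The union $S=(\Sigma\setminus\operatorname{int}(\Sigma'))\cup_{L'}S'$ is then a properly, locally flatly embedded surface in $B^4$ with $\partial S=L$.

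Next I would verify the claimed properties. Because $\Sigma$ is connected, every connected component of $\Sigma\setminus\operatorname{int}(\Sigma')$ meets $L'$, and together with the connectedness of $S'$, the surface $S$ is connected. Gluing along $L'$ gives $\chi(S)=\chi(\Sigma)-\chi(\Sigma')+\chi(S')$ as $\chi(L')=0$, and substituting $\chi=2-2g-b$ into each piece (with $b$ the number of boundary components) yields $g(S)=g(\Sigma)-g(\Sigma')+g(S')$ when $\Sigma'$ is connected. In general one reduces to the connected case by banding components of $\Sigma'$ together along arcs in $\Sigma\setminus\operatorname{int}(\Sigma')$, which does not change the relevant genus count.

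For the ``in particular'' statement, apply the first part with $S'$ a genus-minimising slice surface of $L'$, so $g(S')=g_4(L')$. Since $\Sigma'$ (made connected as above if necessary) is a Seifert surface of $L'$, we have $g(\Sigma')\geq g(L')$, and therefore
\[
g_4(L)\leq g(\Sigma)-g(\Sigma')+g_4(L') \leq g(L)-g(L')+g_4(L'),
\]
which rearranges to $\Delta g(L)\geq \Delta g(L')$.

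I expect the main technical subtlety to lie in the locally flat category: one must arrange the pushed-in piece $\Sigma\setminus\operatorname{int}(\Sigma')$ to be disjoint from $S'$ in the interior of $B^4$, and check that the concatenation along $L'$ yields a genuinely locally flat embedding. Both follow from standard topological transversality and the existence of bicollar neighborhoods of locally flat surfaces, as guaranteed by Freedman--Quinn \cite{fq}.
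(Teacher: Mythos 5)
Your construction is exactly the paper's proof, which consists of the single sentence ``To construct $S$, simply cut out $\Sigma'$ and glue in $S'$''; you have merely supplied the details (the collar push-in, the Euler-characteristic bookkeeping, connectedness, and the local-flatness provisos via Freedman--Quinn), all of which are correct. In the paper's essential application (the proof of the main tool) $\Sigma'$ is connected with a single boundary knot and $S'$ is a disc, so the slightly delicate banding reduction you sketch for disconnected $\Sigma'$ (which changes $L'$ and hence $S'$, and shifts the genus count under the sum-of-genera convention) is not actually needed.
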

\begin{proof}
To construct $S$, simply cut out $\Sigma'$ and glue in $S'$.
\end{proof}
\begin{proof}[Proof of \cref{maintool}]
Let $B$ be a matrix of the Seifert form of $\Sigma$ restricted to $V$, with respect to an arbitrary basis of $V$.
Setting $t=1$ gives $\det(B - B^{\top}) = \pm 1$ and, in fact, $+1$:
indeed, $B-B^{\top}$ is antisymmetric, so $\det(B-B^{\top})$ is the square of its Pfaffian.
It also follows that $V$ is of even rank.
Because $B-B^{\top}$ is antisymmetric and unimodular,
we may assume the basis $x_{1,1}, \ldots, x_{k,1}, x_{1,2},\ldots, x_{k,2}$ of $V$ has been chosen
such that $B-B^{\top}$ is the $2k\times 2k$ matrix
\[
J_{k} =
\left(\!
\begin{array}{c|c}
0 & \mathbbm{1} \\\hline
- \mathbbm{1} & 0 \\
\end{array}\!\right).
\]
Let $\delta_1, \ldots, \delta_n$ be the 
boundary curves of $\Sigma$.
Note that the intersection form of $\Sigma$ is unimodular on $V$ (in fact it is represented by the matrix $B-B^{\top} = J_k$),
and identically zero on $\langle [\delta_1], \ldots, [\delta_{n-1}]\rangle$.
This implies that one can extend the basis of $V$ to a basis of $H_1(\Sigma)$ of the form
\begin{multline*}
x_{1,1}, \ldots, x_{k,1}, x_{1,2},\ldots, x_{k,2},\\
y_{1,1}, \ldots, y_{g-k,1},
        y_{1,2},\ldots, y_{g-k,2},
[\delta_1], \ldots, [\delta_{n-1}].
\end{multline*}
Let $A$ be the matrix of the Seifert form of $\Sigma$ with respect
to this basis. Then $A - A^{\top}$ has the form
\[
\left(
\begin{array}{c|c|c}
J_{k} & * & 0 \\\hline
* & * & 0  \\\hline
0 & 0 & 0  \\
\end{array}\right).
\]
Since $A-A^{\top}$ restricted to the span of the $x_{i,j}$ and $y_{i,j}$ is antisymmetric
and unimodular, one may assume w.l.o.g. that the $y_i$ were chosen such that $A-A^{\top}$ is in fact
\[
\left(
\begin{array}{c|c|c}
J_{k} & 0 & 0 \\\hline
0 & J_{g-k} & 0  \\\hline
0 & 0 & 0  \\
\end{array}\right).
\]
Now let
\[
\gamma_{1,1}, \gamma_{2,1}, \ldots, \gamma_{g,1}, \gamma_{1,2}, \ldots, \gamma_{g,2}, \delta_1, \ldots, \delta_{n-1}
\]
be a geometric basis on $\Sigma$ as in \cref{mpc}.
Let $\varphi$ be the automorphism of $H_1(\Sigma)$ given by
\begin{align*}
[\gamma_{i,j}] & \mapsto x_{i,j}   & \hspace{-7em}\text{for } 1 \leq i \leq k, \\
[\gamma_{i,j}] & \mapsto y_{i-k,j} & \hspace{-7em}\text{for } k < i \leq g, \\
[\delta_i]     & \mapsto [\delta_i].
\end{align*}
As the computation of $A-A^{\top}$ shows, $\varphi$ preserves the intersection form.
It also acts by permutation (in fact, as the identity) on the homology classes of boundary curves, and is therefore realised by a diffeomorphism $\widetilde{\varphi}$ (see \cref{mpc}).
Take a simple closed curve $\zeta$ that separates the curves $\gamma_{1,*},\ldots,\gamma_{k,*}$ from the curves $\gamma_{k+1,*},\ldots,\gamma_{g,*}, \delta_1, \ldots, \delta_n$.
Then $\widetilde{\varphi}(\zeta)$ is a separating simple closed curve, which bounds an
incompressible subsurface $\Sigma'$ of $\Sigma$ of genus $k$. By construction,
$H_1(\Sigma') = V \subset H_1(\Sigma)$, and hence the boundary knot $\widetilde{\varphi}(\zeta)$
of $\Sigma'$ has Alexander polynomial $1$. Thus,
Freedman's theorem  implies that it bounds a slice disc. Using \cref{inherit}, this concludes the proof.
\end{proof}
Let us come back to \cref{tildeX}. So far we have proved that $\partial\widetilde{X}$
has topological slice genus equal to one, while its classical genus equals
two. \Cref{plumbX} will show how this example can be used to build larger
examples with $\Delta g = g_4 = g/2$.
As a sample application, we calculate the topological slice genus of the infinite family provided in the proof of \cref{tree-limit}. These examples are of particular interest since they maximise the ratio
$$\frac{2\Delta g(L)}{b_1(L)}$$
of genus defect and first Betti number among tree-like plumbings of positive Hopf bands.
Indeed, for any plumbing of positive Hopf bands along a tree, this ratio is at most $1/3$ by a theorem of the fourth author~\cite{Lie}.
Therefore, an infinite family of examples that attain this ratio is sufficient to prove the following proposition.

\begin{prop}
\label{tree-limit}
 For the class of links arising as plumbings of positive Hopf bands along a finite tree, we have
 $$\limsup_{b_1(L) \to \infty} \frac{2\Delta g(L)}{b_1(L)} = \frac{1}{3}.$$
\end{prop}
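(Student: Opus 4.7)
The upper bound $\limsup \leq 1/3$ is already in place by the theorem of the fourth author cited as \cite{Lie} just above the proposition, so the task is to exhibit a sequence of tree plumbings $L_k$ of positive Hopf bands with $b_1(L_k)\to\infty$ and $2\Delta g(L_k)/b_1(L_k)\geq 1/3$. The surface $\widetilde{X}$ from \cref{tildeX} already realises this ratio on the nose: $b_1=6$ and $\Delta g=1$, giving $2\Delta g/b_1=1/3$. The natural strategy is to iterate this example.

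For each $k\geq 1$, I would take $\Sigma_k$ to be the surface obtained by plumbing $k$ copies of $\widetilde{X}$ together along a tree: pick a distinguished Hopf band in each copy and plumb successive copies along small squares in these distinguished bands. Inductively, $\Sigma_k$ is a plumbing of $6k$ positive Hopf bands along a tree formed from the $k$ copies of the X-tree by adding $k-1$ extra edges, and hence a fibre surface for its boundary link $L_k$. Using additivity of genus and first Betti number under Murasugi sum this gives
\[
g(L_k)=g(\Sigma_k)=2k,\qquad b_1(L_k)=b_1(\Sigma_k)=6k.
\]

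The heart of the argument is that the rank-$2$ Alexander-trivial subspace $V\subset H_1(\widetilde{X})$ found in \cref{tildeX} stacks under the iteration. Since the Seifert form of a Murasugi sum is the block-diagonal direct sum of the Seifert forms of the summands, the subspace $V_k:=V^{\oplus k}\subset H_1(\Sigma_k)$ inherits a Seifert form equal to the block-diagonal sum of $k$ copies of the matrix $B$ from \cref{tildeX}. Its Alexander polynomial is $\det(tB-B^{\top})^k=t^k$, again a unit in $\mathbb{Z}[t^{\pm 1}]$, so $V_k$ is Alexander-trivial of rank $2k$. Applying \cref{maintool} yields $g_4(L_k)\leq g(\Sigma_k)-k=k$, hence $\Delta g(L_k)\geq k$ and
\[
\frac{2\Delta g(L_k)}{b_1(L_k)}\geq \frac{2k}{6k}=\frac{1}{3}.
\]
Combined with the Liechti bound this is in fact an equality for every $k$; letting $k\to\infty$ gives the proposition.

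The only real obstacle is the combination of two classical facts: that the Seifert form of a Murasugi sum is a direct sum of the Seifert forms of the summands, and that the Alexander polynomial is multiplicative on direct sums of Seifert matrices. Together they ensure that the hypothesis of \cref{maintool} survives the iteration, which is precisely what allows the defect to grow linearly in $k$ while $b_1$ grows as $6k$.
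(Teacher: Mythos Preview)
Your overall strategy coincides with the paper's: stack copies of $\widetilde{X}$ and show that the rank-$2$ Alexander-trivial subspaces assemble into a rank-$2k$ Alexander-trivial subspace. But the justification you give for this assembly is incorrect, and this is precisely the delicate point.

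The Seifert form of a Murasugi sum is \emph{not} block-diagonal. It is only block-\emph{triangular}: for $\alpha\in H_1(\Sigma_1)$ and $\beta\in H_1(\Sigma_2)$ one of $\operatorname{lk}(\alpha^+,\beta)$, $\operatorname{lk}(\beta^+,\alpha)$ vanishes, but the other is governed by how the curves pass through the plumbing square and is generally nonzero. (Already the trefoil, a plumbing of two Hopf bands, has Seifert matrix $\bigl(\begin{smallmatrix}1&1\\0&1\end{smallmatrix}\bigr)$, not $\bigl(\begin{smallmatrix}1&0\\0&1\end{smallmatrix}\bigr)$.) With a nonzero off-diagonal block $C$, the matrix $t\bigl(\begin{smallmatrix}B&0\\C&B\end{smallmatrix}\bigr)-\bigl(\begin{smallmatrix}B&0\\C&B\end{smallmatrix}\bigr)^{\!\top}$ is no longer block-triangular, and its determinant need not be a unit: e.g.\ with $B=\bigl(\begin{smallmatrix}0&0\\1&1\end{smallmatrix}\bigr)$ and $C=\bigl(\begin{smallmatrix}1&0\\0&0\end{smallmatrix}\bigr)$ one computes $\det(tM'-(M')^\top)=t(t^2-t+1)$, which is not a unit. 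So your $V_k=V^{\oplus k}$ is not automatically Alexander-trivial.

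What rescues the argument --- and what the paper isolates as \cref{plumbX} --- is a geometric choice you omitted: one must plumb each new copy of $\widetilde{X}$ along its \emph{right-most} Hopf band, the one that the red curve $\gamma_1$ algebraically avoids. This forces the entries of the off-diagonal block in the $\gamma_1$-row and $\gamma_1$-column to vanish, and a cofactor expansion along that row and column then gives $\det(tM'-(M')^\top)=t\cdot\det(tM-M^\top)$, a unit. Your proof becomes correct once you specify this plumbing location and replace the block-diagonal claim by this computation.
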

\begin{lemma}\label{plumbX}
Let $\Sigma$ be a Seifert surface. Let $\Sigma'$ be a plumbing of $\Sigma$ and $\widetilde{X}$
along a square
on the right-most Hopf band of $\widetilde{X}$ (see \cref{figX}).
If there is an Alexander-trivial subgroup $V\subset H_1(\Sigma)$, then there is also
an Alexander-trivial subgroup $V' \subset H_1(\Sigma')$ of rank $\rk V' = 2 + \rk V$.
\end{lemma}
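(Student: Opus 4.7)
The plan is to set $V' := V \oplus V_{\widetilde X}$, where $V_{\widetilde X} \subset H_1(\widetilde X)$ is the rank-$2$ Alexander-trivial subgroup generated by the classes $[\gamma_1]$ and $[\gamma_2]$ produced in \cref{tildeX}. With this choice the rank count is automatic, so the task reduces to checking that the Seifert form of $\Sigma'$ restricted to $V'$ still has Alexander polynomial a unit.

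First I would invoke the well-known behaviour of Murasugi sum (plumbing along a square) on homology. Writing $S^3 = B_1 \cup_{S^2} B_2$ with $\Sigma \subset B_1$, $\widetilde X \subset B_2$, and the plumbing square $Q$ on the separating sphere, Mayer--Vietoris yields a direct sum decomposition $H_1(\Sigma') = H_1(\Sigma) \oplus H_1(\widetilde X)$, because $Q$ is contractible. This makes $V'$ a genuine subgroup of $H_1(\Sigma')$ of rank $\rk V + 2$.

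Second, I would verify that the Seifert form of $\Sigma'$ splits as a direct sum under this decomposition. For cycles $a$ on $\Sigma$ and $b$ on $\widetilde X$, one can arrange the positive pushoff $a^+$ to lie inside $B_1$; since $B_1$ is a $3$-ball, $a^+$ bounds a disc in $B_1$ disjoint from $b \subset B_2$, so $\lk(a^+, b) = 0$, and symmetrically $\lk(b^+, a) = 0$. A matrix of the Seifert form restricted to $V'$ is therefore block-diagonal with blocks $B$ (the restricted Seifert matrix on $V$) and $B_{\widetilde X}$ (the restricted Seifert matrix on $V_{\widetilde X}$). Its Alexander polynomial factors as the product of the two individual Alexander polynomials, each of which is a unit in $\mathbb{Z}[t^{\pm 1}]$---the first by hypothesis on $V$, and the second by the explicit computation in \cref{tildeX} giving the polynomial $t$. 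The product is still a unit, so $V'$ is Alexander-trivial.

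The only genuine geometric input needed beyond \cref{tildeX} is the direct-sum splitting of the Seifert form under plumbing, and that is the step I expect to require the most careful justification; everything else is formal bookkeeping. The prescribed location of the plumbing square on the right-most Hopf band of $\widetilde X$ does not appear essential for the lemma itself, but it will matter when iterating the construction, since it guarantees that $[\gamma_1]$ and $[\gamma_2]$ admit geometric representatives disjoint from the plumbing region and thus persist unambiguously inside $H_1(\Sigma')$.
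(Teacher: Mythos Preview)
Your argument contains a genuine gap: the Seifert form of a plumbing is \emph{not} block-diagonal with respect to the splitting $H_1(\Sigma') \cong H_1(\Sigma)\oplus H_1(\widetilde X)$. Your ``symmetrically $\lk(b^+,a)=0$'' step is precisely where this fails. At points of the plumbing square $Q\subset S^2$, the positive normal to $\Sigma'$ points into exactly one of the two balls, say $B_1$. Then $a^+$ does lie in $B_1$, giving $\lk(a^+,b)=0$; but for $b\subset\widetilde X$ passing through $Q$, the positive push-off $b^+$ is forced \emph{into} $B_1$ near $Q$, so $b^+$ is not contained in $B_2$ and $\lk(b^+,a)$ need not vanish. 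The simplest witness is the trefoil, a plumbing of two positive Hopf bands, whose Seifert matrix $\left(\begin{smallmatrix}1&1\\0&1\end{smallmatrix}\right)$ is triangular but not diagonal; accordingly its Alexander polynomial $t^2-t+1$ is not the product $(t-1)^2$ of the Hopf-band polynomials. So even after correcting to block-triangular, your factorisation of $\det(tM'-M'^{\top})$ does not follow.

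What the paper actually does is use the location of the plumbing square in an essential way---contrary to your final paragraph. The right-most Hopf band is the one supporting $[\gamma_2]$, and $[\gamma_1]$ has zero coefficient there; hence $\gamma_1$ can be represented by a curve avoiding $Q$ entirely, so \emph{both} push-offs $\gamma_1^{\pm}$ stay in the $\widetilde X$-ball and $\gamma_1$ has zero Seifert pairing with $H_1(\Sigma)$ in both directions. For $\gamma_2$, which must cross $Q$, one gets only the generic triangular vanishing. The resulting matrix has its $\gamma_1$-row and $\gamma_1$-column zero except for the single entry $1$ paired with $\gamma_2$; cofactor expansion along that row then gives $\det(tM'-M'^{\top}) = t\cdot\det(tM-M^{\top})$, which is a unit. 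Note that your claim that $\gamma_2$ can also be taken disjoint from $Q$ is false: $\gamma_2$ is the core of the Hopf band carrying the plumbing square, and the core of an annulus cannot avoid a square whose sides meet both boundary circles.
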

\begin{proof}
Let $\gamma_1, \gamma_2$ be the red and dashed blue curves on $\widetilde{X}$
as in \cref{tildeX}. Let $V' = V + \langle [\gamma_1], [\gamma_2]\rangle$, where
we understand $H_1(\Sigma) \oplus H_1(\widetilde{X})$ as a
subgroup of $H_1(\Sigma')$, because $\Sigma$ and $\widetilde{X}$ are incompressible
subsurfaces of $\Sigma'$, and $H_1(\Sigma) \cap H_1(\widetilde{X}) = \{0\}$.
The crucial observation is that, algebraically, $\gamma_1$ does not pass
through the plumbing location on the right-most Hopf band of $\widetilde{X}$.
Therefore, $\gamma_1$ algebraically does not intersect curves on $\Sigma$; and so, using that $\Sigma'$ is a plumbing, any small push-off of $\gamma_1$ along a normal direction of $\Sigma'$ has linking number 0 with curves on $\Sigma$.
Thus the Seifert form of $\Sigma'$ restricted to $V'$ is represented by the following matrix:
\[
    M' =
\left(\!
\begin{array}{ccc|cc}
  &   &  & 0 & * \\
  & M &  & \vdots & \vdots \\
  &   &  & 0 & * \\\hline
0 & \cdots & 0 & 0 & 0 \\
* & \cdots & * & 1 & 1 \\
\end{array}
\!\right).
\]
Here, $M$ is a matrix of the Seifert form restricted to $V$.
It has Alexander polynomial $1$, hence so does $M'$.
\end{proof}
\begin{figure}[ht]
 \includegraphics{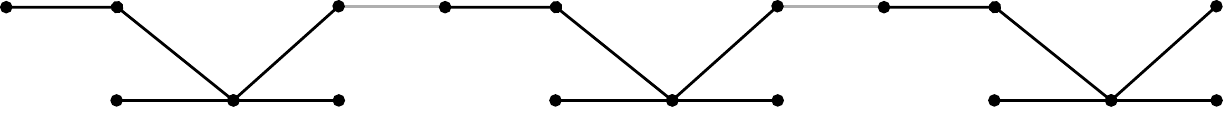}
\caption{How to stick together three copies of the tree corresponding to $\widetilde X$ in order to obtain a link with $b_1=18$.}
\label{k=3}
\end{figure}
\begin{proof}[Proof of \cref{tree-limit}]
In \cref{tildeX}, we considered such a link $L$ with $b_1=6$.
 In order to obtain an example $L_n$ with $b_1 = 6n$,
we simply stick together $n$ distinct copies of the tree corresponding to $\widetilde X$ and take the corresponding positive tree-like Hopf plumbing;
compare \cref{alotoftildeX} for an explicit braid description.
This is shown in \cref{k=3} for the case $n=3$.
By \cref{plumbX}, the corresponding fibre surface has defect $\Delta g\ge n$, which establishes the proposition.
\end{proof}
\begin{remark}
\label{alotoftildeX}
For all $n \geq 1$, the link $L_n$ used in the proof of \cref{tree-limit} can also be obtained as the closure of the $(3n+1)$-braid
\begin{multline*}
a_1 (a_1 a_3 a_2^2a_4a_1a_3a_2^2)(a_4a_6a_5^2a_7a_4a_6a_5^2)\cdots \\
(a_{3k-2}a_{3k}a_{3k-1}^2a_{3k+1}a_{3k-2}a_{3k}a_{3k-1}^2)\cdots(a_{3n-2}a_{3n}a_{3n-1}^2a_{3n-2}a_{3n}a_{3n-1}^2).
\end{multline*}
Furthermore, if we compare the topological slice genus with the classical genus (instead of the first Betti number), the quotient becomes even larger: since the links $L_n$ have topological slice genus $n$ and genus $2n$,
they form an infinite family of examples of positive braid links with $\Delta g = g_4 = g/2$.
\end{remark}
Next, let us focus on braids.
Incompressible subsurfaces of canonical Seifert surfaces of positive braids will typically
be constructed as in the following lemma, whose proof we leave to the reader.
We call a braid word $\beta'$ a \emph{subword} of a braid word $\beta$
if the former arises from the latter by deleting some occurrences of generators.
\begin{lemma}\label{subbraids}
If $\beta'$ is a subword of $\beta$, then
$\Sigma(\beta')$ is an incompressible subsurface of $\Sigma(\beta)$.\hfill$\Box$
\end{lemma}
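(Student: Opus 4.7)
The plan is to realise both $\Sigma(\beta)$ and $\Sigma(\beta')$ as thickenings of graphs and then exploit the fact that subgraph inclusions are automatically injective on $H_1$. First I would recall the construction: for a braid word $\beta = a_{i_1}\cdots a_{i_m}$ on $n$ strands, $\Sigma(\beta)$ is built by taking $n$ pairwise disjoint horizontal disks $D_1,\ldots,D_n$ and attaching, for each generator $a_{i_j}$, a twisted band $b_j$ between $D_{i_j}$ and $D_{i_j+1}$, with the bands stacked according to their order in $\beta$. Given a subword $\beta'$, the subsurface of $\Sigma(\beta)$ consisting of the same disks together with \emph{only} the bands whose generators have been retained is manifestly diffeomorphic to $\Sigma(\beta')$. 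This yields the natural embedding $\Sigma(\beta') \hookrightarrow \Sigma(\beta)$ that I need to analyse.

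Next I would pass to the associated graphs. Collapsing each disk $D_k$ to a vertex $v_k$ and each band $b_j$ to its core arc produces a deformation retract of $\Sigma(\beta)$ onto a graph $G(\beta)$ with $n$ vertices and $m$ edges; performing the same collapse on $\Sigma(\beta')$ gives a deformation retract onto the subgraph $G(\beta') \subset G(\beta)$ spanned by the retained edges. These retracts are visibly compatible with the subsurface inclusion, so the $H_1$-injectivity of $\Sigma(\beta') \hookrightarrow \Sigma(\beta)$ reduces to that of the subgraph inclusion $G(\beta') \hookrightarrow G(\beta)$.

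To finish, I would invoke the standard fact that any subgraph inclusion is $H_1$-injective: a non-zero $1$-cycle supported on $G(\beta')$ remains a non-zero $1$-chain in $G(\beta)$, and since graphs have no $2$-cells it cannot become a boundary. I expect no real obstacle here; the only point requiring a moment of care is to verify that the twisting of the bands does not interfere with the collapse to the graph, but a twisted band deformation retracts onto its core arc regardless of its framing, so this is harmless. Everything else is bookkeeping.
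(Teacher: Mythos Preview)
Your argument is correct. The paper explicitly leaves the proof of this lemma to the reader, so there is no in-paper argument to compare against; your deformation retract onto the disk--band spine and the observation that a subgraph inclusion is automatically $H_1$-injective (graphs have no $2$-cells, so $H_1 = Z_1$) is exactly the natural route, and the compatibility of the retracts is clear since the retract of $\Sigma(\beta)$ collapsing disks to vertices and bands to core arcs restricts to the corresponding retract of $\Sigma(\beta')$.
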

\begin{figure}[ht]
\includegraphics[scale=0.8]{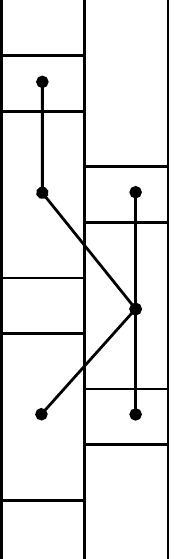}
\caption{A fence diagram (obtained from a braid diagram by replacing the crossings by horizontal line segments) of the braid $a_1(a_2^2a_1^2)^2$.
The tree induced by the distinguished homology generators exhibits $\widetilde X$ as incompressible subsurface of the fibre surface $\Sigma(a_1(a_2^2a_1^2)^2)$.}
\label{37torus}
\end{figure}
\begin{example}\label{t45}
Consider the torus knot $T(4,5)$. It is obtained as the closure of the positive braid $(a_1a_2a_3)^5$,
which contains the subword $a_1a_2^2a_3a_1a_2^3a_3$, whose closure equals the closure of $a_1a_3a_2^2a_1a_3a_2^3$.
In particular, the fibre surface $\Sigma(T(4,5))$ contains $\widetilde X$ as an incompressible subsurface.
Together with the bound coming from the signature function $\sigma_{e^{\pi i t}}(T(4,5))=10$ for $7/10 < t < 9/10$, this yields $g_4(T(4,5)) = 5$.
\end{example}
\begin{remark}\label{lt}
In \cref{t45}, we used half the absolute value of a Levine-Tristram signature as a lower bound for the topological slice genus of a knot. While well-known to experts, until recently this lower bound had not been explicitly stated in the literature in the topological setting (compare~\cite{tristram} for the smooth setting). This gap in the literature was closed by Powell with a new proof \cite{Powell_16}.
\end{remark}
\begin{example}\label{t37}
 Consider the torus knot $T(3,7)$. It is obtained as the closure of the positive braid $(a_1a_2)^7$,
 which contains $a_1(a_2^2a_1^2)^2$ as a subword.
 On the other hand, $\Sigma(a_1(a_2^2a_1^2)^2)$ contains $\widetilde X$ as an incompressible subsurface.
 This is schematically depicted in \cref{37torus}. Together with the bound coming from the signature function
 $\sigma_{e^{\pi i t}}(T(3,7))=10$ for $16/21 < t < 20/21$, this yields $g_4(T(3,7)) = 5$.
\end{example}
Suppose $\alpha$ and $\beta$ are braid words for non-split $n$-braids. Then $\Sigma(\alpha\beta)$ contains
$\Sigma(\alpha) \sqcup \Sigma(\beta)$ as incompressible subsurface. So if \cref{maintool}
produces genus defects $d_1$ and $d_2$ in $\Sigma(\alpha)$ and $\Sigma(\beta)$, respectively,
this will yield a defect of $d_1 + d_2$ in $\Sigma(\alpha\beta)$.
The following lemma is a refinement of this strategy for constructing genus defect in the product of two braids.
See \cref{kurzerwurm,langerwurm} for applications.
\begin{lemma}\label{zusammenstecken}
Let $\alpha, \beta$ be two braid words representing non-split $n$-braids.
Let $\beta'$ be the braid word of length $n-1$
obtained from $\beta$ by deleting for all $i\in\{1,\ldots, n-1\}$ all but the first
occurrences of the generator $a_i$.
Let $V \subset H_1(\Sigma({\alpha\beta'}))$ and
$V' \subset H_1(\Sigma({\beta}))$ be
Alexander-trivial subgroups.
Let a basis of $V$ be given with respect to which the Seifert form of $\Sigma(\alpha\beta')$ has a matrix
of the following kind, built from four square blocks:%
\footnote{For example, all \emph{trivial Alexander bases} \cite{TeGa} are of this kind (but not vice versa).
In fact, one can prove that every Alexander-trivial subgroup $V$ has such a basis; but we will not need this fact here.}
\[
\newcommand{\bigzero}{0}
\newcommand{\bigast}{*}
\left(\!
\begin{array}{c|c}
\bigzero &
\begin{array}{*{3}{c@{\ }}} 1 && \bigast \\[-2ex] & \ddots & \\[-1ex] \bigzero && 1 \end{array}
\\\hline
\begin{array}{*{3}{c@{\ }}} 0 && 0 \\[-2ex] & \ddots & \\[-1ex] \bigast && 0 \end{array}
& \bigast
\end{array}\!\right).
\]
Suppose moreover that the first half of that basis
is supported in $H_1(\Sigma({\alpha}))$, which can be seen as a subgroup of $H_1(\Sigma({\alpha\beta'}))$
by \cref{subbraids}.
Then there is an Alexander-trivial subgroup $V'' \subset H_1(\Sigma(\alpha\beta))$ of rank $\rk V + \rk V'$.
\end{lemma}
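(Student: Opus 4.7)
The plan is to take $V'' := V \oplus V' \subseteq H_1(\Sigma(\alpha\beta))$, after first verifying that this sum makes sense. Since $\beta$ must contain every generator (otherwise $\widehat{\beta}$ would be split), $\beta'$ has length exactly $n-1$, each generator appearing once, so $\Sigma(\beta')$ is a connected surface of Euler characteristic $1$, i.e.~a disk. Applied to $\Sigma(\alpha\beta) = \Sigma(\alpha\beta') \cup \Sigma(\beta)$ with intersection $\Sigma(\beta')$, the Mayer--Vietoris sequence then collapses to a natural isomorphism
\[
H_1(\Sigma(\alpha\beta)) \;\cong\; H_1(\Sigma(\alpha\beta')) \oplus H_1(\Sigma(\beta)),
\]
and hence $V \oplus V'$ is well-defined and of rank $\rk V + \rk V'$.

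Next I would compute the matrix $B''$ of the Seifert form of $\Sigma(\alpha\beta)$ restricted to $V''$, in the concatenated basis $v_1,\dots,v_{2k}, w_1,\dots,w_m$. Because $\Sigma(\alpha\beta')$ and $\Sigma(\beta)$ are incompressible in $\Sigma(\alpha\beta)$ by \cref{subbraids}, the diagonal blocks of $B''$ equal $B$ and $B'$. The key geometric claim is that the cross-linking numbers between $V_1 := \langle v_1,\dots,v_k\rangle \subseteq H_1(\Sigma(\alpha))$ and $V'$ vanish in both directions. In the standard Bennequin embedding of $\Sigma(\alpha\beta)$, the $\alpha$-bands lie in a time slab $\{t < t_0\}$ and the $\beta$-bands in $\{t > t_0\}$, with the Seifert disks straddling both slabs. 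Any cycle representing a class in $H_1(\Sigma(\alpha))$ can be realised strictly in $\{t < t_0\}$ by routing the disk arcs to the $\alpha$-side, and its positive normal push-off stays on that side; symmetrically for cycles in $\Sigma(\beta)$. Two closed curves in disjoint open half-spaces bound disjoint surfaces there, so their linking number vanishes.

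Reordering the basis as $v_1,\dots,v_k,\, w_1,\dots,w_m,\, v_{k+1},\dots,v_{2k}$ brings $B''$ into the form
\[
B'' = \begin{pmatrix} 0 & 0 & U \\ 0 & B' & \ast \\ L & \ast & \ast \end{pmatrix},
\]
so that $tB'' - (B'')^{\top}$ has vanishing blocks strictly above the anti-diagonal, with anti-diagonal blocks $tU - L^{\top}$, $tB' - (B')^{\top}$, and $tL - U^{\top}$. Reversing the outer two block rows yields a block upper-triangular matrix, so the determinant factors (up to sign) as
\[
\det(tU - L^{\top})\cdot\det(tB' - (B')^{\top})\cdot\det(tL - U^{\top}).
\]
Since $U$ is upper unitriangular and $L$ strictly lower triangular, $tU - L^{\top}$ is upper triangular with diagonal $t$ (determinant $t^k$) and $tL - U^{\top}$ is lower triangular with diagonal $-1$ (determinant $(-1)^k$). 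The middle factor is a unit by the Alexander-triviality of $V'$. Therefore $\det(tB'' - (B'')^{\top})$ is a unit in $\Z[t^{\pm 1}]$, which proves $V''$ is Alexander-trivial.

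The step I expect to require the most care is the spatial separation claim: one must verify, in the Bennequin-style embedding, that the positive normal push-off of a cycle in $\Sigma(\alpha)$ really does stay on the $\alpha$-side of the separating slab. This reduces to tracking normal directions at the Seifert disks and across the twisted bands, but is essentially a standard property of braided Seifert surfaces; once in hand, the remainder of the argument is the block-matrix determinant computation above.
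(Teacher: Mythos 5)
Your proposal is correct and follows essentially the same route as the paper: you take $V''=V\oplus V'$ inside $H_1(\Sigma(\alpha\beta))$ using the incompressible subsurfaces $\Sigma(\alpha\beta')$ and $\Sigma(\beta)$, observe that the cross-terms between the $\Sigma(\alpha)$-supported half of the basis of $V$ and $V'$ vanish, and obtain exactly the block matrix the paper writes down. The only difference is that you make explicit what the paper leaves implicit -- the Mayer--Vietoris justification of the direct sum, the linking-number separation argument, and the block-triangular determinant computation in place of ``after some basis changes'' -- all of which are carried out correctly.
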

\begin{proof}
The idea is similar to the proof of \cref{plumbX}. The surface $\Sigma(\alpha\beta)$ has incompressible
subsurfaces $\Sigma(\alpha\beta')$ and $\Sigma(\beta)$, so we may treat
$H_1(\Sigma(\alpha\beta'))$ and $H_1(\Sigma(\beta))$ as subgroups of $H_1(\Sigma(\alpha\beta))$.
Their intersection is in fact trivial, and so we have $V\cap V' = \{0\}$ as well.
Extend the given basis of $V$ to a basis of $V + V'$. With respect to this basis,
the restriction of the Seifert form of $\Sigma(\alpha\beta)$ to $V + V'$ is represented by the following matrix:
\[
\newcommand{\bigzero}{0}
\newcommand{\bigast}{*}
M' =
\left(\!
\begin{array}{c|c|c}
\bigzero &
\begin{array}{*{3}{c@{\ }}} 1 && \bigast \\[-2ex] & \ddots & \\[-1ex] \bigzero && 1 \end{array} & 0
\\\hline
\begin{array}{*{3}{c@{\ }}} 0 && 0 \\[-2ex] & \ddots & \\[-1ex] \bigast && 0 \end{array}
& \bigast & \bigast \\\hline
0 & \bigast & M
\end{array}\!\right).
\]
Here, $M$ is the matrix of the Seifert form restricted to $V'$, which has Alexander polynomial $1$.
After some basis changes, one sees that $M'$ has Alexander polynomial $1$ as well.
\end{proof}
\begin{example}\label{kurzerwurm}
We have seen in \cref{t37} how the closure of
\[
\beta = a_1(a_2^2a_1^2)^2
\]
has defect at least one,
which comes from two vectors $v, w$ restricted to which the Seifert form has
the matrix
\[
\begin{pmatrix}
0 & 1 \\
0 & *
\end{pmatrix}.
\]
The vectors $v$ and $w$ are the homology classes of the red and blue curves
drawn in \cref{figX}. As already discussed in the proof of \cref{plumbX}, $v \in H_1(\Sigma(\alpha)) \subset H_1(\Sigma(\beta))$,
where $\alpha = a_1a_2^2a_1^2a_2^2a_1$. Let $\beta' = a_1a_2$ as in the previous lemma.
Then $\alpha\beta'$ contains $\beta$ as a subword, and so $\Sigma(\alpha\beta')$ also has defect at least $1$.
So the previous lemma implies that $\alpha\beta = a_1(a_2^2a_1^2)^4$ has defect
at least $2$. Continuing inductively, one finds a defect of at least $i$ in the
closure of the braid
\[
\alpha^{i-1}\beta = a_1(a_2^2a_1^2)^{2i}.
\]
The same result may be obtained using \cref{plumbX}, since $\widetilde{X}\subset \Sigma(\beta)$,
as shown in \cref{37torus}.
\end{example}
\begin{remark}\label{comp}
\Cref{maintool} shows how to construct slice surfaces using nothing but linear algebra.
The following randomised algorithm exploits this. As input, it takes
an arbitrary integral square matrix $A$, and returns as output the basis of a subgroup $V\subset \mathbb{Z}^{2g}$
with respect to which $A|_V$ has a matrix of the following kind:
\[
\newcommand{\bigzero}{0}
\newcommand{\bigast}{*}
\left(\!
\begin{array}{c|c}
\bigzero &
\begin{array}{*{3}{c@{\ }}} 1 && \bigzero \\[-2ex] & \ddots & \\[-1ex] \bigzero && 1 \end{array}
\\\hline
\begin{array}{*{3}{c@{\ }}} 0 && \bigast \\[-2ex] & \ddots & \\[-1ex] \bigzero && 0 \end{array}
& \bigast
\end{array}\!\right).
\]
Note that such a matrix has Alexander polynomial 1.
Here is a brief description of the algorithm:
\begin{enumerate}
\item Randomly pick a primitive vector $v$ with $v^{\top}Av = 0$, if such a vector exists.
Otherwise, return the empty basis.
\item Randomly pick a solution $w$ of the following system of linear equations, if it is solvable:
\[
v^{\top}Aw  = 1, \qquad
w^{\top}Av  = 0.
\]
Otherwise, go back to (1), or eventually give up and return the empty basis.
\item Let $U$ be the subgroup of solutions of the following system of homogeneous linear equations:
\[
v^{\top}Au  = 0, \qquad
u^{\top}Av  = 0, \qquad
u^{\top}Aw  = 0.
\]
Let $(v_1, \ldots, v_k, w_1, \ldots, w_k)$ be the result of the recursive application of the algorithm to $A|_U$.
Return $$(v, v_1,\ldots, v_k, w, w_1, \ldots, w_k).$$
\end{enumerate}
Implemented in pari/gp \cite{pari}, the algorithm performs quite well for small knots.
See \cref{table64} for the results thus obtained for small torus knots,
and \cref{langerwurm} for the application to another positive braid.
The bases of the respective subgroups $V$ are available from ancillary files with the arXiv-version of this paper,
which enables anybody to independently verify their correctness.
\end{remark}
\begin{example} \label{langerwurm}
\newcommand{\olomega}{\widetilde{\omega}}
Consider the positive braids $\omega  = a_1a_2a_3a_4, \olomega  = a_4a_3a_2a_1$.
The algorithm described in~\cref{comp} returns an Alexander-trivial subgroup $V\subset H_1(\Sigma((\omega\olomega)^4))$ of rank eight
(we used \cite{collins} to obtain Seifert matrices).
Moreover, the first half of the basis of $V$ is supported in $H_1(\Sigma((\omega\olomega)^3\omega))$.
Similarly, there is an Alexander-trivial
subgroup $V' \subset H_1(\Sigma((\olomega\omega)^4))$ of rank eight with a basis whose first half is supported in $H_1(\Sigma((\olomega\omega)^3\olomega))$. %
Applying~\cref{zusammenstecken} to $(\omega\olomega)^3\omega$ and $(\olomega\omega)^4$
gives a defect of eight in $\Sigma((\omega\olomega)^7\omega)$.
We may continue applying the lemma inductively, first to $(\omega\olomega)^7$ and $(\omega\olomega)^4$,
producing a defect of twelve in $\Sigma((\omega\olomega)^{11})$, then to
$(\omega\olomega)^{10}\omega$ and $(\olomega\omega)^4$ etc.
In summary, we find for all $i\geq 0$ a defect of $4+8i$ for $\Sigma((\omega\olomega)^{4+7i})$,
and of $8i$ for $\Sigma((\omega\olomega)^{7i}\omega)$.
\end{example}

\section{Slice genus of large torus knots}
\label{sec3}
The aim of this section is to prove the asymptotic bound for the genus defect of torus knots given by \cref{T2}.
As a start, we establish a weaker version of \cref{T2} with the benefit that its proof, unlike the proof of \cref{T2}, does not require computer calculations.
The strategies of both proofs are very much alike.

\begin{prop}
\label{fifth}
$$\lim_{n,m\to\infty}\frac{g_4(T(n,m))}{g(T(n,m))}\le\frac{4}{5}.$$
\end{prop}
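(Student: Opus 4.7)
The plan is to apply \cref{maintool} to the canonical Seifert surface $\Sigma((a_1\cdots a_{n-1})^m)$ of $T(n,m)$ by exhibiting an Alexander-trivial subgroup of $H_1$ whose rank grows asymptotically like $(n-1)(m-1)/5$; this immediately yields $g_4(T(n,m))\leq \tfrac{4}{5}g(T(n,m))+o(g)$, and passing to the limit gives the claim. The construction splits into a ``parallel'' step that on its own produces defect of order $nm/18$ (only enough for $g_4/g\leq 8/9$), and a ``bridging'' step that lifts the total to $nm/10$, in the spirit of the proof of \cref{T2} but with building blocks small enough that all verifications can be done by hand.

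For the parallel step, I would partition the $n$ strands into disjoint consecutive triples $\{3j{+}1,3j{+}2,3j{+}3\}$ and, on each triple, restrict $(a_1\cdots a_{n-1})^m$ to the generators $\{a_{3j+1},a_{3j+2}\}$; by \cref{subbraids}, this produces an incompressible subsurface of $\Sigma(T(n,m))$ equal to a disjoint union of $\lfloor n/3\rfloor$ copies of $\Sigma(T(3,m))$. Inside each such copy I would apply the iterated construction of \cref{kurzerwurm}: the sub-braid $a_1(a_2^2a_1^2)^{2i}$ is a subword of $(a_1a_2)^m$ whenever $m\geq 6i+1$, and iterated use of \cref{plumbX} provides an Alexander-trivial subgroup of rank $2i$. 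Summing across the $\lfloor n/3\rfloor$ blocks gives total rank $\sim 2\lfloor n/3\rfloor\lfloor (m-1)/6\rfloor\sim nm/9$ and hence defect $\sim nm/18$.

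To bridge from $8/9$ to $4/5$ I would combine adjacent 3-strand blocks through the separator generators $a_{3j+3}$ (unused in the previous step) via \cref{zusammenstecken}: for each adjacent pair take $\alpha$ to be the first block's sub-braid together with one occurrence of the separator and $\beta$ the next block's sub-braid, hoping that the combined $\alpha\beta'$ matches the hypothesis of the lemma so that each bridge contributes roughly $m/6$ extra rank. Iterating over the $\sim n/3$ separators lifts the total rank to $\sim 2nm/5$, hence defect $\sim nm/10$, exactly the amount needed. I expect this bridging to be the main obstacle: the hypothesis of \cref{zusammenstecken} imposes a rigid block form on the Seifert matrix of $V$ (upper-triangular with $1$s on the diagonal in one off-diagonal block, strictly lower-triangular in the other) together with support of the first half of the basis of $V$ in $H_1(\Sigma(\alpha))$. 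Engineering a basis of $V$ that satisfies both constraints starting from the iterated $\widetilde{X}$-construction is a concrete but intricate linear-algebra task, tractable by hand precisely because the building blocks are small; this is the feature distinguishing the proof of \cref{fifth} from that of \cref{T2}, where larger and denser blocks force a computer-assisted verification.
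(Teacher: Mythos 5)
Your ``parallel step'' is sound as far as it goes: deleting the generators $a_3,a_6,\dots$ from $(a_1\cdots a_{n-1})^m$ does give $\lfloor n/3\rfloor$ split copies of $\Sigma(T(3,m))$, and \cref{kurzerwurm} gives defect roughly $m/6$ in each, but as you say this only yields $g_4/g\le 8/9$. The gap is the bridging step, and it is a real one. \Cref{zusammenstecken} applies to two \emph{non-split} braid words $\alpha,\beta$ on the same $n$ strands, stacked as a product in the braid group, and its conclusion is only that the ranks add: $\rk V''=\rk V+\rk V'$. It never manufactures Alexander-trivial classes beyond those already present in the two constituent pieces. In your setup the two adjacent $3$-strand blocks use disjoint generator sets, so their defects already add by the split-union observation preceding \cref{zusammenstecken}; appending a single occurrence of the separator $a_{3j+3}$ to $\alpha$ contributes no new rank, and your $\beta$ is split as an $n$-braid, so the hypotheses of the lemma are not even met. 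No construction is offered of Alexander-trivial classes supported on the separator bands --- this is exactly the ``hoping that'' sentence --- and that is where the proof fails. Moreover, even granting the optimistic yield of $m/6$ extra rank per separator, the bookkeeping does not reach $4/5$: total rank $\sim nm/9+ (n/3)(m/6)=nm/6$ gives defect $nm/12$ and hence only $g_4/g\le 5/6$; to get $4/5$ you need defect $\sim nm/10$, i.e.\ rank $\sim nm/5$ (note also the internal slip: rank $2nm/5$ would give defect $nm/5$, not $nm/10$, since the defect is half the rank by \cref{maintool}).

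The paper's proof rests on a different idea, which is precisely the ingredient missing from your plan: it abandons the strand-partition picture altogether. It works with $T(5n,5n)$, the closure of $\Delta_{5n}^2$, and proves by explicit braid relations (\cref{subsurface}, with $\ell=3$) that $\Sigma(\Delta_N)$ contains $\Sigma(\Gamma_2\Gamma_3\Omega_3^{N-5}\Gamma_3\Gamma_2)\sqcup\Sigma(\Delta_{N-5})$ as an incompressible subsurface; iterating produces split $3$-strand pieces $\Gamma_2\Gamma_3\Omega_3^{10i}\Gamma_3\Gamma_2$, $i=1,\dots,n-1$, whose total Betti number is asymptotically $4/5$ of that of the whole fibre surface --- much more than the $2/3$ available from a partition into strand-triples. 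Each piece contains $a_1(a_1^2a_2^2)^{10i}$ as a subword, with defect at least $5i$ by \cref{kurzerwurm}, so the total defect is $\tfrac{5n^2-5n}{2}$ against genus $\sim\tfrac{25n^2}{2}$, giving ratio $1/5$ and hence the bound $4/5$; the statement for general $n,m\to\infty$ then follows from the existence of the limit (subadditivity), since only the diagonal family $T(5n,5n)$ is treated. If you want to salvage your approach, the separator columns must be exploited by some genuinely new mechanism of this kind, not by \cref{zusammenstecken}.
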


The strategy of the proof of \cref{fifth} is to establish that the fibre surface $\Sigma(T(n,n))$ of the torus link $T(n,n)$ contains
as incompressible subsurface
the split union of fibre surfaces of the form $\Sigma(a_1(a_1^2a_2^2)^{2i})$ such that this union takes up roughly four-fifths of the genus of $\Sigma(T(n,n))$.
This yields \cref{fifth} since the genus defect of the closure of ${a_1(a_1^2a_2^2)^{2i}}$ is at least $i$, which is about a quarter of the genus. Indeed, first conjugating by $a_1$ and then reading the braid word backwards (both of these operations preserve the closure up to changing the orientation of all components) turns ${a_1(a_1^2a_2^2)^{2i}}$ into ${a_1(a_2^2a_1^2)^{2i}}$, whose defect is discussed in \cref{kurzerwurm}.
To make this strategy precise we use \cref{subsurface}.
Let $\Delta_n$ be the half twist on $n$ strands, i.e. $$\Delta_n = (a_1a_2\cdots a_{n-1})(a_1a_2\cdots a_{n-2})\cdots (a_1a_2)(a_1).$$
Furthermore, we define the positive braids $\Omega_i$ and $\Gamma_j$ by
\begin{align*}
\Omega_{i} &= a_1a_2\cdots a_{i-2}a_{i-1}^2a_{i-2}\cdots a_2a_1, \\
\Gamma_j &= a_1a_2\cdots a_{j-2}a_{j-1}a_{j-2}\cdots a_2a_1.
\end{align*}

\begin{figure}[ht]
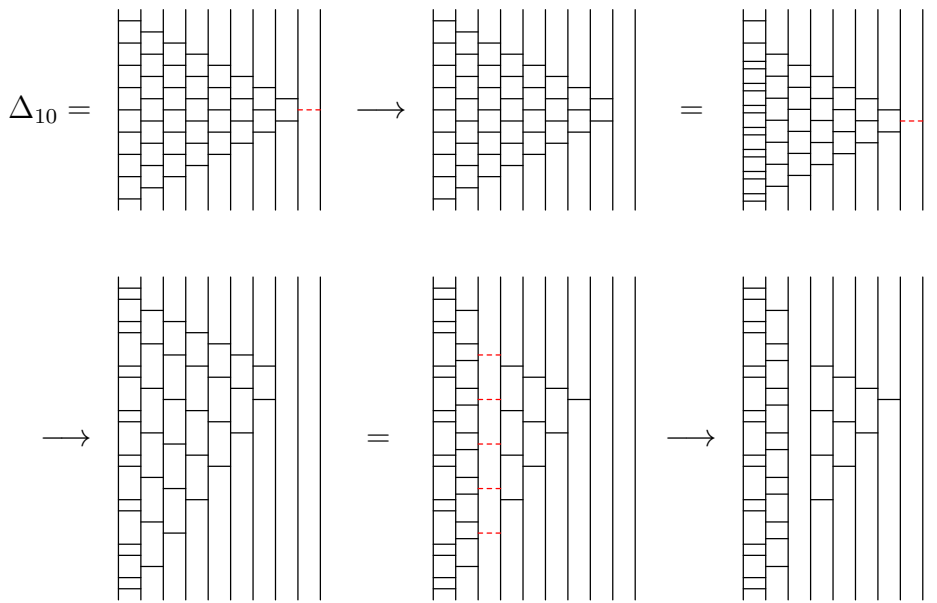

\centering
\begin{tabular}{rlll}
$\Delta_{10}=$ & $\xygraph{
!{0;/r0.7pc/:}
[u(4.5)]!{\xcapv[9]@(0)}
[lu]!{\xcapv[9]@(0)}
[lu]!{\xcapv[9]@(0)}
[lu]!{\xcapv[9]@(0)}
[lu]!{\xcapv[9]@(0)}
[lu]!{\xcapv[9]@(0)}
[lu]!{\xcapv[9]@(0)}
[lu]!{\xcapv[9]@(0)}
[lu]!{\xcapv[9]@(0)}
[lu]!{\xcapv[9]@(0)}
[u(0.5)] !{\xcaph[1]@(0)}
[d(0.5)] !{\xcaph[1]@(0)}
[d(0.5)] !{\xcaph[1]@(0)}
[d(0.5)] !{\xcaph[1]@(0)}
[d(0.5)] !{\xcaph[1]@(0)}
[d(0.5)] !{\xcaph[1]@(0)}
[d(0.5)] !{\xcaph[1]@(0)}
[d(0.5)] !{\xcaph[1]@(0)}
[d(0.5)] !{\color{red}\xcaph[0.2]@(0)} [l(0.6)] !{\xcaph[0.2]@(0)} [l(0.6)] !{\xcaph[0.2]@(0)} [l(0.8)]
[u(3)lllllllll] !{\color{black}\xcaph[1]@(0)}
[d(0.5)] !{\xcaph[1]@(0)}
[d(0.5)] !{\xcaph[1]@(0)}
[d(0.5)] !{\xcaph[1]@(0)}
[d(0.5)] !{\xcaph[1]@(0)}
[d(0.5)] !{\xcaph[1]@(0)}
[d(0.5)] !{\xcaph[1]@(0)}
[d(0.5)] !{\xcaph[1]@(0)}
[u(2.5)llllllll] !{\xcaph[1]@(0)}
[d(0.5)] !{\xcaph[1]@(0)}
[d(0.5)] !{\xcaph[1]@(0)}
[d(0.5)] !{\xcaph[1]@(0)}
[d(0.5)] !{\xcaph[1]@(0)}
[d(0.5)] !{\xcaph[1]@(0)}
[d(0.5)] !{\xcaph[1]@(0)}
[u(2)lllllll] !{\xcaph[1]@(0)}
[d(0.5)] !{\xcaph[1]@(0)}
[d(0.5)] !{\xcaph[1]@(0)}
[d(0.5)] !{\xcaph[1]@(0)}
[d(0.5)] !{\xcaph[1]@(0)}
[d(0.5)] !{\xcaph[1]@(0)}
[u(1.5)llllll] !{\xcaph[1]@(0)}
[d(0.5)] !{\xcaph[1]@(0)}
[d(0.5)] !{\xcaph[1]@(0)}
[d(0.5)] !{\xcaph[1]@(0)}
[d(0.5)] !{\xcaph[1]@(0)}
[u(1)lllll] !{\xcaph[1]@(0)}
[d(0.5)] !{\xcaph[1]@(0)}
[d(0.5)] !{\xcaph[1]@(0)}
[d(0.5)] !{\xcaph[1]@(0)}
[u(0.5)llll] !{\xcaph[1]@(0)}
[d(0.5)] !{\xcaph[1]@(0)}
[d(0.5)] !{\xcaph[1]@(0)}
[lll] !{\xcaph[1]@(0)}
[d(0.5)] !{\xcaph[1]@(0)}
[d(0.5)ll] !{\xcaph[1]@(0)}
} \hspace{-5.5pc}
\longrightarrow$
& $\xygraph{
!{0;/r0.7pc/:}
[u(4.5)]!{\xcapv[9]@(0)}
[lu]!{\xcapv[9]@(0)}
[lu]!{\xcapv[9]@(0)}
[lu]!{\xcapv[9]@(0)}
[lu]!{\xcapv[9]@(0)}
[lu]!{\xcapv[9]@(0)}
[lu]!{\xcapv[9]@(0)}
[lu]!{\xcapv[9]@(0)}
[lu]!{\xcapv[9]@(0)}
[lu]!{\xcapv[9]@(0)}
[u(0.5)] !{\xcaph[1]@(0)}
[d(0.5)] !{\xcaph[1]@(0)}
[d(0.5)] !{\xcaph[1]@(0)}
[d(0.5)] !{\xcaph[1]@(0)}
[d(0.5)] !{\xcaph[1]@(0)}
[d(0.5)] !{\xcaph[1]@(0)}
[d(0.5)] !{\xcaph[1]@(0)}
[d(0.5)] !{\xcaph[1]@(0)}
[u(2.5)llllllll] !{\xcaph[1]@(0)}
[d(0.5)] !{\xcaph[1]@(0)}
[d(0.5)] !{\xcaph[1]@(0)}
[d(0.5)] !{\xcaph[1]@(0)}
[d(0.5)] !{\xcaph[1]@(0)}
[d(0.5)] !{\xcaph[1]@(0)}
[d(0.5)] !{\xcaph[1]@(0)}
[d(0.5)] !{\xcaph[1]@(0)}
[u(2.5)llllllll] !{\xcaph[1]@(0)}
[d(0.5)] !{\xcaph[1]@(0)}
[d(0.5)] !{\xcaph[1]@(0)}
[d(0.5)] !{\xcaph[1]@(0)}
[d(0.5)] !{\xcaph[1]@(0)}
[d(0.5)] !{\xcaph[1]@(0)}
[d(0.5)] !{\xcaph[1]@(0)}
[u(2)lllllll] !{\xcaph[1]@(0)}
[d(0.5)] !{\xcaph[1]@(0)}
[d(0.5)] !{\xcaph[1]@(0)}
[d(0.5)] !{\xcaph[1]@(0)}
[d(0.5)] !{\xcaph[1]@(0)}
[d(0.5)] !{\xcaph[1]@(0)}
[u(1.5)llllll] !{\xcaph[1]@(0)}
[d(0.5)] !{\xcaph[1]@(0)}
[d(0.5)] !{\xcaph[1]@(0)}
[d(0.5)] !{\xcaph[1]@(0)}
[d(0.5)] !{\xcaph[1]@(0)}
[u(1)lllll] !{\xcaph[1]@(0)}
[d(0.5)] !{\xcaph[1]@(0)}
[d(0.5)] !{\xcaph[1]@(0)}
[d(0.5)] !{\xcaph[1]@(0)}
[u(0.5)llll] !{\xcaph[1]@(0)}
[d(0.5)] !{\xcaph[1]@(0)}
[d(0.5)] !{\xcaph[1]@(0)}
[lll] !{\xcaph[1]@(0)}
[d(0.5)] !{\xcaph[1]@(0)}
[d(0.5)ll] !{\xcaph[1]@(0)}
} \hspace{-5.2pc}
=$
& $\xygraph{
!{0;/r0.7pc/:}
[u(4.5)]!{\xcapv[9]@(0)}
[lu]!{\xcapv[9]@(0)}
[lu]!{\xcapv[9]@(0)}
[lu]!{\xcapv[9]@(0)}
[lu]!{\xcapv[9]@(0)}
[lu]!{\xcapv[9]@(0)}
[lu]!{\xcapv[9]@(0)}
[lu]!{\xcapv[9]@(0)}
[lu]!{\xcapv[9]@(0)}
[lu]!{\xcapv[9]@(0)}
[u(0.5)] !{\xcaph[1]@(0)}
[d(1)l] !{\xcaph[1]@(0)}
[d(0.5)] !{\xcaph[1]@(0)}
[d(0.5)] !{\xcaph[1]@(0)}
[d(0.5)] !{\xcaph[1]@(0)}
[d(0.5)] !{\xcaph[1]@(0)}
[d(0.5)] !{\xcaph[1]@(0)}
[d(0.5)] !{\xcaph[1]@(0)}
[d(0.5)] !{\color{red}\xcaph[0.2]@(0)} [l(0.6)] !{\xcaph[0.2]@(0)} [l(0.6)] !{\xcaph[0.2]@(0)} [l(0.8)]
[u(2.677)llllllll] !{\color{black}
\xcaph[1]@(0)}
[d(0.333)l] !{\xcaph[1]@(0)}
[d(0.333)] !{\xcaph[1]@(0)}
[d(0.5)] !{\xcaph[1]@(0)}
[d(0.5)] !{\xcaph[1]@(0)}
[d(0.5)] !{\xcaph[1]@(0)}
[d(0.5)] !{\xcaph[1]@(0)}
[d(0.5)] !{\xcaph[1]@(0)}
[u(2.177)lllllll] !{\xcaph[1]@(0)}
[d(0.333)l] !{\xcaph[1]@(0)}
[d(0.333)] !{\xcaph[1]@(0)}
[d(0.5)] !{\xcaph[1]@(0)}
[d(0.5)] !{\xcaph[1]@(0)}
[d(0.5)] !{\xcaph[1]@(0)}
[d(0.5)] !{\xcaph[1]@(0)}
[u(1.677)llllll] !{\xcaph[1]@(0)}
[d(0.333)l] !{\xcaph[1]@(0)}
[d(0.333)] !{\xcaph[1]@(0)}
[d(0.5)] !{\xcaph[1]@(0)}
[d(0.5)] !{\xcaph[1]@(0)}
[d(0.5)] !{\xcaph[1]@(0)}
[u(1.177)lllll] !{\xcaph[1]@(0)}
[d(0.333)l] !{\xcaph[1]@(0)}
[d(0.333)] !{\xcaph[1]@(0)}
[d(0.5)] !{\xcaph[1]@(0)}
[d(0.5)] !{\xcaph[1]@(0)}
[u(0.677)llll] !{\xcaph[1]@(0)}
[d(0.333)l] !{\xcaph[1]@(0)}
[d(0.333)] !{\xcaph[1]@(0)}
[d(0.5)] !{\xcaph[1]@(0)}
[u(0.177)lll] !{\xcaph[1]@(0)}
[d(0.333)l] !{\xcaph[1]@(0)}
[d(0.333)] !{\xcaph[1]@(0)}
[d(0.333)ll] !{\xcaph[1]@(0)}
[d(0.333)l] !{\xcaph[1]@(0)}
}$ \hspace{-130pt}
\\[10ex]
$\longrightarrow$
& $\xygraph{
!{0;/r0.7pc/:}
[u(7.25)]!{\xcapv[14.5]@(0)}
[lu]!{\xcapv[14.5]@(0)}
[lu]!{\xcapv[14.5]@(0)}
[lu]!{\xcapv[14.5]@(0)}
[lu]!{\xcapv[14.5]@(0)}
[lu]!{\xcapv[14.5]@(0)}
[lu]!{\xcapv[14.5]@(0)}
[lu]!{\xcapv[14.5]@(0)}
[lu]!{\xcapv[14.5]@(0)}
[lu]!{\xcapv[14.5]@(0)}
[u(0.5)] !{\xcaph[1]@(0)}
[d(0.5)l] !{\xcaph[1]@(0)}
[d(0.5)] !{\xcaph[1]@(0)}
[d(0.5)] !{\xcaph[1]@(0)}
[d(0.5)] !{\xcaph[1]@(0)}
[d(0.5)] !{\xcaph[1]@(0)}
[d(0.5)] !{\xcaph[1]@(0)}
[d(0.5)] !{\xcaph[1]@(0)}
[u(2)lllllll] !{\xcaph[1]@(0)}
[d(0.5)l] !{\xcaph[1]@(0)}
[d(0.5)] !{\xcaph[1]@(0)}
[d(0.5)] !{\xcaph[1]@(0)}
[d(0.5)] !{\xcaph[1]@(0)}
[d(0.5)] !{\xcaph[1]@(0)}
[d(0.5)] !{\xcaph[1]@(0)}
[d(0.5)] !{\xcaph[1]@(0)}
[u(1.5)lllllll] !{\xcaph[1]@(0)}
[d(0.5)l] !{\xcaph[1]@(0)}
[d(0.5)] !{\xcaph[1]@(0)}
[d(0.5)] !{\xcaph[1]@(0)}
[d(0.5)] !{\xcaph[1]@(0)}
[d(0.5)] !{\xcaph[1]@(0)}
[d(0.5)] !{\xcaph[1]@(0)}
[u(1)llllll] !{\xcaph[1]@(0)}
[d(0.5)l] !{\xcaph[1]@(0)}
[d(0.5)] !{\xcaph[1]@(0)}
[d(0.5)] !{\xcaph[1]@(0)}
[d(0.5)] !{\xcaph[1]@(0)}
[d(0.5)] !{\xcaph[1]@(0)}
[u(0.5)lllll] !{\xcaph[1]@(0)}
[d(0.5)l] !{\xcaph[1]@(0)}
[d(0.5)] !{\xcaph[1]@(0)}
[d(0.5)] !{\xcaph[1]@(0)}
[d(0.5)] !{\xcaph[1]@(0)}
[llll] !{\xcaph[1]@(0)}
[d(0.5)l] !{\xcaph[1]@(0)}
[d(0.5)] !{\xcaph[1]@(0)}
[d(0.5)] !{\xcaph[1]@(0)}
[d(0.5)lll] !{\xcaph[1]@(0)}
[d(0.5)l] !{\xcaph[1]@(0)}
[d(0.5)] !{\xcaph[1]@(0)}
[d(0.5)ll] !{\xcaph[1]@(0)}
[d(0.5)l] !{\xcaph[1]@(0)}
} \hspace{-9pc}
=$
& $\xygraph{
!{0;/r0.7pc/:}
[u(7.25)]!{\xcapv[14.5]@(0)}
[lu]!{\xcapv[14.5]@(0)}
[lu]!{\xcapv[14.5]@(0)}
[lu]!{\xcapv[14.5]@(0)}
[lu]!{\xcapv[14.5]@(0)}
[lu]!{\xcapv[14.5]@(0)}
[lu]!{\xcapv[14.5]@(0)}
[lu]!{\xcapv[14.5]@(0)}
[lu]!{\xcapv[14.5]@(0)}
[lu]!{\xcapv[14.5]@(0)}
[u(0.5)] !{\xcaph[1]@(0)}
[d(0.5)l] !{\xcaph[1]@(0)}
[d(0.5)] !{\xcaph[1]@(0)}
[d(0.5)ll] !{\xcaph[1]@(0)}
[d(0.5)l] !{\xcaph[1]@(0)}
[d(0.5)] !{\xcaph[1]@(0)}
[d(0.5)] !{\color{red}\xcaph[0.2]@(0)} [l(0.6)] !{\xcaph[0.2]@(0)} [l(0.6)] !{\xcaph[0.2]@(0)\color{black}} [l(0.8)]
[d(0.5)] !{\xcaph[1]@(0)}
[d(0.5)] !{\xcaph[1]@(0)}
[d(0.5)] !{\xcaph[1]@(0)}
[d(0.5)] !{\xcaph[1]@(0)}
[u(1.75)llllll] !{\color{black}\xcaph[1]@(0)}
[d(0.25)ll] !{\xcaph[1]@(0)}
[d(0.5)l] !{\xcaph[1]@(0)}
[d(0.5)] !{\xcaph[1]@(0)}
[d(0.5)] !{\color{red}\xcaph[0.2]@(0)} [l(0.6)] !{\xcaph[0.2]@(0)} [l(0.6)] !{\xcaph[0.2]@(0)\color{black}} [l(0.8)]
[d(0.5)] !{\xcaph[1]@(0)}
[d(0.5)] !{\xcaph[1]@(0)}
[d(0.5)] !{\xcaph[1]@(0)}
[u(1.25)lllll] !{\color{black}\xcaph[1]@(0)}
[d(0.25)ll] !{\xcaph[1]@(0)}
[d(0.5)l] !{\xcaph[1]@(0)}
[d(0.5)] !{\xcaph[1]@(0)}
[d(0.5)]  !{\color{red}\xcaph[0.2]@(0)} [l(0.6)] !{\xcaph[0.2]@(0)} [l(0.6)] !{\xcaph[0.2]@(0)\color{black}} [l(0.8)]
[d(0.5)] !{\xcaph[1]@(0)}
[d(0.5)] !{\xcaph[1]@(0)}
[u(0.75)llll] !{\color{black}\xcaph[1]@(0)}
[d(0.25)ll] !{\xcaph[1]@(0)}
[d(0.5)l] !{\xcaph[1]@(0)}
[d(0.5)] !{\xcaph[1]@(0)}
[d(0.5)]  !{\color{red}\xcaph[0.2]@(0)} [l(0.6)] !{\xcaph[0.2]@(0)} [l(0.6)] !{\xcaph[0.2]@(0)\color{black}} [l(0.8)]
[d(0.5)] !{\xcaph[1]@(0)}
[u(0.25)lll] !{\color{black}\xcaph[1]@(0)}
[d(0.25)ll] !{\xcaph[1]@(0)}
[d(0.5)l] !{\xcaph[1]@(0)}
[d(0.5)] !{\xcaph[1]@(0)}
[d(0.5)]  !{\color{red}\xcaph[0.2]@(0)} [l(0.6)] !{\xcaph[0.2]@(0)} [l(0.6)] !{\xcaph[0.2]@(0)\color{black}} [l(0.8)]
[d(0.25)ll] !{\xcaph[1]@(0)}
[d(0.25)ll] !{\xcaph[1]@(0)}
[d(0.5)l] !{\xcaph[1]@(0)}
[d(0.5)] !{\xcaph[1]@(0)}
[d(0.5)ll] !{\xcaph[1]@(0)}
[d(0.5)l] !{\xcaph[1]@(0)}
} \hspace{-9.5pc} \longrightarrow$
& $\xygraph{
!{0;/r0.7pc/:}
[u(7.25)]!{\xcapv[14.5]@(0)}
[lu]!{\xcapv[14.5]@(0)}
[lu]!{\xcapv[14.5]@(0)}
[lu]!{\xcapv[14.5]@(0)}
[lu]!{\xcapv[14.5]@(0)}
[lu]!{\xcapv[14.5]@(0)}
[lu]!{\xcapv[14.5]@(0)}
[lu]!{\xcapv[14.5]@(0)}
[lu]!{\xcapv[14.5]@(0)}
[lu]!{\xcapv[14.5]@(0)}
[u(0.5)] !{\xcaph[1]@(0)}
[d(0.5)l] !{\xcaph[1]@(0)}
[d(0.5)] !{\xcaph[1]@(0)}
[d(0.5)ll] !{\xcaph[1]@(0)}
[d(0.5)l] !{\xcaph[1]@(0)}
[d(0.5)] !{\xcaph[1]@(0)}
[d(1.0)r] !{\xcaph[1]@(0)}
[d(0.5)] !{\xcaph[1]@(0)}
[d(0.5)] !{\xcaph[1]@(0)}
[d(0.5)] !{\xcaph[1]@(0)}
[u(1.75)llllll] !{\xcaph[1]@(0)}
[d(0.25)ll] !{\xcaph[1]@(0)}
[d(0.5)l] !{\xcaph[1]@(0)}
[d(0.5)] !{\xcaph[1]@(0)}
[d(1.0)r] !{\xcaph[1]@(0)}
[d(0.5)] !{\xcaph[1]@(0)}
[d(0.5)] !{\xcaph[1]@(0)}
[u(1.25)lllll] !{\color{black}\xcaph[1]@(0)}
[d(0.25)ll] !{\xcaph[1]@(0)}
[d(0.5)l] !{\xcaph[1]@(0)}
[d(0.5)] !{\xcaph[1]@(0)}
[d(1.0)r] !{\xcaph[1]@(0)}
[d(0.5)] !{\xcaph[1]@(0)}
[u(0.75)llll] !{\color{black}\xcaph[1]@(0)}
[d(0.25)ll] !{\xcaph[1]@(0)}
[d(0.5)l] !{\xcaph[1]@(0)}
[d(0.5)] !{\xcaph[1]@(0)}
[d(1.0)r] !{\xcaph[1]@(0)}
[u(0.25)lll] !{\color{black}\xcaph[1]@(0)}
[d(0.25)ll] !{\xcaph[1]@(0)}
[d(0.5)l] !{\xcaph[1]@(0)}
[d(0.5)] !{\xcaph[1]@(0)}
[d(0.75)l] !{\xcaph[1]@(0)}
[d(0.25)ll] !{\xcaph[1]@(0)}
[d(0.5)l] !{\xcaph[1]@(0)}
[d(0.5)] !{\xcaph[1]@(0)}
[d(0.5)ll] !{\xcaph[1]@(0)}
[d(0.5)l] !{\xcaph[1]@(0)}
}$ \hspace{-130pt}
\end{tabular}
\caption{By applying braid relations and deleting generators, the $10$-strand braid word $\Delta_{10}$ is transformed into
$\Gamma_2\Gamma_3\Gamma_4a_4a_5a_6a_7\Gamma_4a_4a_5a_6\Gamma_4a_4a_5\Gamma_4a_4\Gamma_4\Gamma_3\Gamma_2$.
In the final step, deleting generators produces the disjoint union of $\Gamma_2\Gamma_3 \Omega_3^5 \Gamma_3\Gamma_2$ and $\Delta_5$.
Arrows indicate the deletion of generators drawn red and dashed.%
}
\label{fig:Deltatoaabb}
\end{figure}

\begin{lemma}
\label{subsurface}
Let $n\ge 2\ell$ be natural numbers. Then $\Sigma(\Delta_{n})$ contains $$\Sigma(\Gamma_2\cdots\Gamma_{\ell}\Omega_\ell^{n-2\ell+1}\Gamma_\ell\cdots\Gamma_2) \sqcup \Sigma(\Delta_{n-2\ell+1})$$ as an incompressible subsurface.
\end{lemma}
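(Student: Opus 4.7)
My plan is to mimic the explicit calculation shown in \cref{fig:Deltatoaabb} for the case $(n,\ell)=(10,3)$ and carry it out for arbitrary $(n,\ell)$ with $n\ge 2\ell$. The target is to produce, using braid relations (which preserve the braid in $B_n$ and hence leave $\Sigma(\Delta_n)$ invariant up to isotopy, since for a positive braid word the canonical Seifert surface is the fibre surface) together with generator deletions (which by \cref{subbraids} produce incompressible subsurfaces), a positive braid word on $n$ strands that splits as
\[
\Gamma_2\cdots\Gamma_\ell\,\Omega_\ell^{n-2\ell+1}\,\Gamma_\ell\cdots\Gamma_2\ \sqcup\ \Delta_{n-2\ell+1},
\]
with the two pieces supported on two disjoint blocks of consecutive strands (the remaining $\ell-1$ strands being through-strands).

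The central step is a braid-word identity: $\Delta_n$ is equivalent, modulo braid relations and a few early deletions, to a word of the form
\[
\Gamma_2\Gamma_3\cdots\Gamma_\ell\cdot M\cdot\Gamma_\ell\cdots\Gamma_3\Gamma_2,
\]
where the middle block $M$ consists of $n-2\ell+1$ copies of $\Gamma_{\ell+1}=a_1\cdots a_{\ell-1}a_\ell a_{\ell-1}\cdots a_1$, alternating with $n-2\ell$ decreasing-length runs $(a_{\ell+1}a_{\ell+2}\cdots a_k)$ for $k$ ranging from $n-\ell$ down to $\ell+1$, exactly as illustrated for $(n,\ell)=(10,3)$ in the figure. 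I would establish this identity by induction on $\ell$, applying the fundamental recursion $\Delta_n=(a_1\cdots a_{n-1})\Delta_{n-1}=\Delta_{n-1}(a_{n-1}\cdots a_1)$ symmetrically to peel off a new $\Gamma_k$ from each side at each stage, and repeatedly using commutation relations to marshal the resulting letters into the palindromic $\Gamma$-blocks.

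Given the intermediate form, the final operation is local: delete the unique letter $a_\ell$ inside each of the $n-2\ell+1$ middle copies of $\Gamma_{\ell+1}$. Each such deletion turns $\Gamma_{\ell+1}$ into $\Omega_\ell=a_1\cdots a_{\ell-1}a_{\ell-1}\cdots a_1$, and simultaneously removes every crossing between strand $\ell$ and strand $\ell+1$. The surviving letters then split into two commuting families: those in the first family use generators $a_1,\ldots,a_{\ell-1}$ (acting on strands $\{1,\ldots,\ell\}$), those in the second family use generators $a_{\ell+1},\ldots,a_{n-\ell}$ (acting on strands $\{\ell+1,\ldots,n-\ell+1\}$), and $a_ia_j=a_ja_i$ for $|i-j|\ge 2$ lets us sort the word into two independent pieces. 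The first family assembles, in order, into $\Gamma_2\cdots\Gamma_\ell\,\Omega_\ell^{n-2\ell+1}\,\Gamma_\ell\cdots\Gamma_2$; the second family, after the index shift $a_{\ell+j}\mapsto a_j$, assembles into the standard canonical word $(a_1\cdots a_{n-2\ell})(a_1\cdots a_{n-2\ell-1})\cdots(a_1)$ for $\Delta_{n-2\ell+1}$ on its block of $n-2\ell+1$ strands. A final application of \cref{subbraids} then delivers the claimed incompressible subsurface as a disjoint union.

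The hardest part is making the rewriting of $\Delta_n$ into the intermediate form precise and self-contained for general $(n,\ell)$. The figure treats $(10,3)$ in a few explicit steps combining braid relations with some early partial deletions; the inductive approach sketched above extends the argument, but at each stage one has to verify carefully that the braid relations really do produce the next layer of $\Gamma_\ell$-blocks and that the surviving middle word retains the alternating structure required for the subsequent induction step to apply.
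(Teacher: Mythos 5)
You follow the same route as the paper: transform $\Delta_n$ by braid relations together with a small number of deletions into the intermediate word $\Gamma_2\cdots\Gamma_{\ell+1}(a_{\ell+1}\cdots a_{n-\ell})\Gamma_{\ell+1}(a_{\ell+1}\cdots a_{n-\ell-1})\cdots\Gamma_{\ell+1}(a_{\ell+1})\Gamma_{\ell+1}\cdots\Gamma_2$, then delete the unique $a_\ell$ in each of the $n-2\ell+1$ copies of $\Gamma_{\ell+1}$, so that the remaining letters split into a word in $a_1,\dots,a_{\ell-1}$ giving $\Gamma_2\cdots\Gamma_\ell\Omega_\ell^{n-2\ell+1}\Gamma_\ell\cdots\Gamma_2$ and a word in $a_{\ell+1},\dots,a_{n-\ell}$ giving $\Delta_{n-2\ell+1}$ after the index shift, and conclude with \cref{subbraids}. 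Your final deletion-and-splitting step and the reduction to \cref{subbraids} are correct and agree with the paper's proof.

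The genuine gap is the step you yourself flag: the rewriting of $\Delta_n$ into the intermediate form is only asserted, and the proposed induction via $\Delta_n=(a_1\cdots a_{n-1})\Delta_{n-1}$ is not carried out. That rewriting is the entire combinatorial content of the lemma, and it cannot be achieved by braid relations alone (they preserve the letter count), so the unspecified ``few early deletions'' are exactly where the precision is needed. The paper's proof supplies the mechanism: write $\Delta_n=\Gamma_2(a_2\cdots a_{n-1})\Gamma_2(a_2\cdots a_{n-2})\cdots\Gamma_2(a_2)\Gamma_2$; at each of $\ell-1$ stages delete the \emph{single} occurrence of the currently highest generator (first $a_{n-1}$, then $a_{n-2}$, \dots, finally $a_{n-\ell+1}$) and then apply repeatedly the substitution
\[
(a_i\cdots a_j)\,\Gamma_i\,(a_i\cdots a_{j})\;\longrightarrow\;\Gamma_{i+1}(a_{i+1}\cdots a_j)(a_i\cdots a_{j-1}),
\]
which is realised by commutations followed by the relations $a_ka_{k-1}a_k\to a_{k-1}a_ka_{k-1}$ for $k=j$ down to $i$. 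This turns the stage-$i$ word $\Gamma_2\cdots\Gamma_i(a_i\cdots a_{n-i+1})\Gamma_i\cdots\Gamma_i(a_i)\Gamma_i\cdots\Gamma_2$ into the analogous stage-$(i+1)$ word; the point that makes the induction close up --- and that your sketch does not verify --- is that after each stage the new highest generator again occurs exactly once, so the next deletion again removes a single letter and the alternating pattern of $\Gamma$-blocks and decreasing runs is reproduced. Your alternative peeling scheme may well be workable, but until this verification (the substitution move above, or an equivalent) is written down for general $(n,\ell)$, the proof is incomplete.
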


\begin{proof}
We proceed by showing that one can delete generators and apply braid relations in the braid word
$\Delta_{n}$ such that the resulting positive braid is the split union of the positive braids $\Gamma_2\cdots\Gamma_{\ell}\Omega_\ell^{n-2\ell+1}\Gamma_\ell\cdots\Gamma_2$ and $\Delta_{n-2\ell+1}$.
This suffices to establish \cref{subsurface} since deleting a generator in a positive braid word corresponds to taking an incompressible subsurface of the associated fibre surface (see \cref{subbraids}).
We start by considering the positive braid word
$$\Delta_{n}= \Gamma_2(a_2\cdots a_{n-1})\Gamma_2(a_2\cdots a_{n-2})\cdots \Gamma_2 (a_2a_3)\Gamma_2 (a_2)\Gamma_2.$$
We delete the single occurrence of the generator $a_{n-1}$ in $\Delta_{n}$
and then apply braid relations to obtain the positive braid word
$$\Gamma_2\Gamma_3(a_3\cdots a_{n-2})\Gamma_3(a_3\cdots a_{n-3})\cdots \Gamma_3 (a_3a_4)\Gamma_3 (a_3) \Gamma_3\Gamma_2.$$
This can be achieved by multiple substitutions of the form 
$$(a_i\cdots a_j)\Gamma_i(a_i\cdots a_{j})\to\Gamma_{i+1}(a_{i+1}\cdots a_j)(a_i\cdots a_{j-1})$$
for $i \leq j$, which can in turn be realised by braid relations. To see the realisation of this substitution by braid relations, commute generators to rewrite the positive braid word
$$(a_i\cdots a_j)\Gamma_i(a_i\cdots a_{j})$$
as
$$a_i\Gamma_ia_{i+1}a_{i}a_{i+2}a_{i+1}\cdots a_{j-1}a_{j-2}a_ja_{j-1}a_j.$$
Then, applying the braid relation 
$$a_ka_{k-1}a_k \to a_{k-1}a_ka_{k-1}$$
once for each $k$ starting at $j$ and descending down to $i$ yields the positive braid word
$$\Gamma_{i+1}a_{i+1}a_{i}a_{i+2}\cdots a_{j-3}a_{j-1}a_{j-2}a_ja_{j-1},$$
for which generators can again be commuted to finally result in
$$\Gamma_{i+1}(a_{i+1}\cdots a_j)(a_i\cdots a_{j-1}).$$

In the next step, we delete the single occurrence of the generator $a_{n-2}$ in the positive braid word 
$$\Gamma_2\Gamma_3(a_3\cdots a_{n-2})\Gamma_3(a_3\cdots a_{n-3})\cdots \Gamma_3 (a_3a_4)\Gamma_3 (a_3) \Gamma_3\Gamma_2$$
and, again
using substitutions of the form 
$$(a_i\cdots a_j)\Gamma_i(a_i\cdots a_{j})\to\Gamma_{i+1}(a_{i+1}\cdots a_j)(a_i\cdots a_{j-1}),$$
obtain the positive braid word
$$\Gamma_2\Gamma_3\Gamma_4(a_4\cdots a_{n-3})\Gamma_4(a_4\cdots a_{n-4})\cdots \Gamma_4 (a_4a_5)\Gamma_4 (a_4) \Gamma_4\Gamma_3\Gamma_2.$$
We continue in the same way until we arrive at the positive braid word
\begin{multline*}
\Gamma_2\cdots\Gamma_{\ell+1}(a_{\ell+1}\cdots a_{n-\ell})\Gamma_{\ell+1}(a_{\ell+1}\cdots a_{n-\ell-1})\cdots\\
        \Gamma_{\ell+1} (a_{\ell+1}a_{\ell+2})\Gamma_{\ell+1} (a_{\ell+1}) \Gamma_{\ell+1}\cdots\Gamma_2.
\end{multline*}
Finally, we delete all occurrences of $a_\ell$.
The closure of the positive braid obtained in this way is the split union of the closures of the braids
$\Gamma_2\cdots\Gamma_{\ell}\Omega_\ell^{n-2\ell+1}\Gamma_\ell\cdots\Gamma_2$ and $\Delta_{n-2\ell+1}$.
This procedure is illustrated in \cref{fig:Deltatoaabb} for $n=10$ and $\ell=3$.
\end{proof}

\begin{proof}[Proof of \cref{fifth}]
Consider the positive braid word $\Delta_{5n}$. By \cref{subsurface} with $\ell=3$,
$\Sigma(\Delta_{5n})$ contains $$\Sigma(\Gamma_2\Gamma_{3}\Omega_3^{5n-5}\Gamma_3\Gamma_2) \sqcup \Sigma(\Delta_{5n-5})$$ as an incompressible subsurface.
Using \cref{subsurface} with $\ell=3$ inductively on the last split summand, we obtain that $\Sigma(\Delta_{5n})$ contains
$$\Sigma(\Gamma_2\Gamma_{3}\Omega_3^{5n-5}\Gamma_3\Gamma_2) \sqcup \cdots \sqcup \Sigma(\Gamma_2\Gamma_{3}\Omega_3^{5}\Gamma_3\Gamma_2)$$ as an incompressible subsurface.
The same argument gives
$$\Sigma(\Gamma_2\Gamma_{3}\Omega_3^{10n-10}\Gamma_3\Gamma_2) \sqcup \cdots \sqcup \Sigma(\Gamma_2\Gamma_{3}\Omega_3^{10}\Gamma_3\Gamma_2)$$ as an incompressible subsurface of the fibre surface $\Sigma(\Delta_{5n}^2)$.
By the definitions of $\Omega_3$, $\Gamma_2$ and $\Gamma_3$,
the positive braid $\Gamma_2\Gamma_{3}\Omega_3^{10i}\Gamma_3\Gamma_2$ contains $a_1(a_1^2a_2^2)^{10i}$ as a subword.
Furthermore the closure of the braid $a_1(a_1^2a_2^2)^{10i}$ has genus defect at least $5i$ (see \cref{kurzerwurm}).
In this way, using all the surfaces of the split union, we can produce a genus defect of at least
$$\sum_{i=1}^{n-1}5i %
=\frac{5n^2 -5n}{2}.$$
From this we obtain
$$\frac{\Delta g(T(5n,5n))}{g(T(5n,5n))} \geq \frac{{5n^2-5n}}{{25n^2-15n+2}}\xrightarrow{n\to\infty}\frac{1}{5},$$
which establishes \cref{fifth}.
\end{proof}

\begin{proof}[Proof of \cref{T2}]
 We proceed as in the proof of \cref{fifth}. However, instead of $\ell=3$ we use $\ell=5$ when applying \cref{subsurface}
 and obtain that $\Sigma(\Delta_{9n})$ contains $$\Sigma(\Omega_5^{9(n-1)}) \sqcup \Sigma(\Omega_5^{9(n-2)}) \sqcup \cdots \sqcup \Sigma(\Omega_5^{9})$$
 as an incompressible subsurface.
 As seen in \cref{langerwurm}, the closure of the braid $\Omega_5^{4+7j}$ has genus defect at least $4+8j$.
 For every split summand $\Omega_5^{9i}$, we consider the largest subword of the form $\Omega_5^{4+7j}$ and produce genus defect accordingly.
 In this way, we produce at least
 $4 + 8\left\lfloor \frac{9i-4}{7}\right\rfloor \geq \frac{72i}{7} - \frac{60}{7}$
 genus defect per summand.
 In total, this amounts to a genus defect of at least $$\sum_{i=1}^{n-1}\frac{72}{7}i -\frac{60}{7} =  \frac{72n^2}{14} + O(n).$$
 On the other hand, we have $$g(\Sigma(\Delta_{9n})) = \frac{81n^2}{4} + O(n).$$
 From this we obtain $$g_4(\Sigma(\Delta_{9n}))\le \frac{81n^2}{4} + O(n) -\frac{72n^2}{14} - O(n)= \frac{423n^2}{28} + O(n),$$
 which finally yields
 $$\frac{g_4(T(9n,9n))}{g(T(9n,9n))} \leq \frac{\frac{423n^2}{28} + O(n)}{\frac{81n^2}{4}+ O(n)}\xrightarrow{n\to\infty}\frac{47}{63}<\frac{3}{4}$$
 and establishes \cref{T2}.
\end{proof}

\section{Slice genus of small torus knots}
\label{sec4}
This section is devoted to the proof of \cref{T1}. In fact, we will 
prove a generalisation to links. For links, the topological slice genus is bounded by the signature and nullity (denoted by $\mu$) as follows \cite{KT}:
\[
|\sigma(L)| - \# L + 1 + \mu(L) \leq 2g_4(L).
\]
\begin{prop}\label{prop1}
Let $L= T(p,q)$ be a torus link with non-maximal signature and nullity bound,~i.e.
$L \neq T(2,n)$, $T(3,3)$, $T(3,4)$, $T(3,5)$, $T(3,6)$, $T(4,4)$. Then
$$g_4(L) \leq \frac{6}{7} \, g(L).$$
\end{prop}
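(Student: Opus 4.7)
The plan is to combine \cref{T2} with explicit defect constructions for torus links of small braid index.

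First, since $\tfrac{3}{4}<\tfrac{6}{7}$, \cref{T2} yields a threshold $N$ such that every torus link $T(p,q)$ with $\min\{p,q\}\ge N$ satisfies $g_4\le\tfrac{3}{4}g<\tfrac{6}{7}g$. This reduces the proposition to the infinitely many torus links with small braid index, $\min\{p,q\}\le N-1$.

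For each fixed small $p\le N-1$, I would propagate the bound to arbitrarily large $q$ by induction, using \cref{subbraids} and \cref{inherit} to transport defect from incompressible subsurfaces of $\Sigma((a_1\cdots a_{p-1})^q)$, and using \cref{zusammenstecken} to combine defects coming from distinct substrings of the braid word $(a_1\cdots a_{p-1})^q$. In particular, once the bound $g_4\le\tfrac{6}{7}g$ is established at the increment $T(p,p)$ and at one representative per residue class of $q$ modulo $p$, the induction propagates it to all larger $q$ in that class. The remaining finitely many base cases are verified using the tools of \cref{sec2}: the algorithm of \cref{comp} applied to the Seifert matrices of small torus links, supplemented by the explicit subsurface constructions of \cref{t45}, \cref{t37}, \cref{kurzerwurm}, and \cref{langerwurm}. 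The excluded list $T(2,n)$, $T(3,3)$, $T(3,4)$, $T(3,5)$, $T(3,6)$, $T(4,4)$ should coincide exactly with those cases where the Kauffman--Taylor lower bound $\tfrac{1}{2}(|\sigma(L)|-\#L+1+\mu(L))$ exceeds $\tfrac{6}{7}\,g(L)$, obstructing the desired inequality.

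The main obstacle lies in verifying the base cases systematically. In particular, the diagonal torus links $T(p,p)$ for $p\in\{5,6,\ldots,N-1\}$ drive the induction: one must confirm that each such link admits an Alexander-trivial subgroup of rank at least $\tfrac{2}{7}g(T(p,p))$, or, failing that for some $p$, replace the induction step with a larger increment such as $T(p,2p)$. Ensuring that every residue class is covered and that the excluded cases are precisely those where no further defect is possible will require a careful case analysis guided by computer search.
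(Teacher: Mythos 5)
Your plan is genuinely different from the paper's proof, but as it stands it has real gaps rather than just unfinished bookkeeping. First, the reduction via \cref{T2} only yields a non-explicit threshold $N$: the proof of \cref{T2} is asymptotic (a limit with unspecified $O(n)$ terms, combined with a Fekete-type subadditivity argument), so extracting a concrete $N$ would require making all of those estimates effective, which you do not do. Second, and more seriously, everything below the threshold is deferred to ``a careful case analysis guided by computer search'': for every braid index $p\le N-1$ you need a defect bound for the increment link and one base case per residue class of $q$, and you give no reason why this (potentially enormous) computation is feasible. Note that your proposed increments already fail at the smallest cases: $T(3,3)$ and $T(4,4)$ have $\Delta g=0$ and $T(5,5)$ has $\Delta g=1$, while keeping $\Delta g\ge\frac17 g$ under $q\mapsto q+p$ needs a defect increment of roughly $p(p-1)/14$; so each $p$ needs an ad hoc larger increment whose defect must itself be certified, and the randomised algorithm of \cref{comp} comes with no guarantee on matrices of growing size. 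In short, the proposal is a strategy whose hard content is exactly the part left open.

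The paper avoids both problems and never actually invokes \cref{T2} in the proof. Its key ingredients, absent from your plan, are \cref{abc} (the fibre surface of $T(pq,r)$ contains that of $T(p,qr)$ as an incompressible subsurface), which converts defect data for braid indices $3$, $4$, $5$ into bounds valid for \emph{all} multiples of $3$, $4$, $5$ uniformly in $q$ (\cref{lemma345}, with explicit ratios $8/51$, $2/11$, $1/5$ for $2\Delta g/b_1$), and the numerical-semigroup trick of writing every remaining $p$ as $3a+4b$ with $a,b\ge1$ and adding defects of the incompressible split subsurface $\Sigma(T(3a,q))\sqcup\Sigma(T(4b,q))\subset\Sigma(T(3a+4b,q))$; a short arithmetic check then gives $2\Delta g/b_1\ge 1/7$ for all $q\ge10$, and the finitely many cases $p,q\le 9$ are read off \cref{table64}. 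This keeps all computer verification within $b_1\le 64$ and requires no effective threshold. If you want to salvage your approach, you would essentially have to re-import these two ideas, at which point the detour through \cref{T2} becomes unnecessary.
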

According to \cref{T2}, most torus links satisfy $\frac{g_4}{g}<\frac{3}{4}$.
The bulk of the proof of \cref{prop1} is thus an investigation of small torus links.
Their genus defects can often be found by computer calculation (see \cref{comp}),
or are inherited by incompressible subsurfaces, e.g. using the following construction:
\begin{lemma}[{\cite[Proposition~1]{Ba1}}]
\label{abc}
Let $p,q,r \in \N$ with $p \leq r$. Then $\Sigma(T(pq,r))$ contains $\Sigma(T(p,qr))$ as incompressible subsurface.\hfill$\Box$
\end{lemma}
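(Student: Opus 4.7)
The plan is to realize the inclusion by the subword strategy of \cref{subbraids}, after rewriting via positive braid relations. Present $T(pq,r)$ as the closure of the $r$-strand positive braid
\[
\beta = (a_1 a_2 \cdots a_{r-1})^{pq},
\]
whose canonical Seifert surface is $\Sigma(T(pq,r))$. The hypothesis $p \leq r$ ensures that the smaller torus link $T(p,qr)$ admits its own standard $p$-strand presentation $(a_1 \cdots a_{p-1})^{qr}$ that can be ``matched'' to a sub-braid of $\beta$ on the first $p$ strands, with the remaining $r-p$ strands acting as spectators.

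The goal is to apply positive braid relations to $\beta$ (which preserve $\Sigma(\beta)$ up to isotopy) and then delete occurrences of $a_p, a_{p+1}, \ldots, a_{r-1}$ to reach the sub-braid $(a_1 \cdots a_{p-1})^{qr}$. By \cref{subbraids}, the resulting word's canonical Seifert surface — namely $\Sigma(T(p,qr))$ together with $r-p$ trivial disks from the spectator strands — is then an incompressible subsurface of $\Sigma(T(pq,r))$.

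To construct the rewrite, factor $\beta = \gamma^q$ with $\gamma = (a_1 \cdots a_{r-1})^p$ (the $r$-strand braid for $T(p,r)$). Within each factor $\gamma$, use iterated triple relations $a_{i+1} a_i a_{i+1} = a_i a_{i+1} a_i$ and commutations $a_i a_j = a_j a_i$ (for $|i-j|\geq 2$) to migrate generator counts from the large-index set $\{a_p, \ldots, a_{r-1}\}$ into the small-index set $\{a_1, \ldots, a_{p-1}\}$, until what remains after deleting the large-index occurrences is exactly $(a_1 \cdots a_{p-1})^r$. The template case $(p,r)=(2,3)$ is the computation $(a_1 a_2)^2 = a_1(a_2 a_1 a_2) = a_1(a_1 a_2 a_1) = a_1^2 a_2 a_1$, after which deleting the single $a_2$ leaves $a_1^3$. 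Concatenating the $q$ rewrites and deletions produces the desired subword $((a_1 \cdots a_{p-1})^r)^q = (a_1 \cdots a_{p-1})^{qr}$.

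The main obstacle is verifying this migration combinatorially for all $p \leq r$, since within a single factor $\gamma$ one may not always be able to produce the required triple substrings without first commuting across factor boundaries in $\gamma^q$. I expect this to be handled by an induction on $r-p$, or by an explicit substitution scheme analogous to the one employed in the proof of \cref{subsurface}. As a fallback, the identity $b_1(\Sigma(T(pq,r))) - b_1(\Sigma(T(p,qr))) = (q-1)(r-p)$ suggests an alternative geometric route: exhibiting $\Sigma(T(pq,r))$ as a plumbing of $\Sigma(T(p,qr))$ with exactly $(q-1)(r-p)$ positive Hopf bands would give the incompressible inclusion directly, bypassing the braid-word bookkeeping.
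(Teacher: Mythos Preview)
The paper does not prove this lemma: the $\Box$ after the statement signals that it is quoted from \cite[Proposition~1]{Ba1} without reproof, so there is no in-paper argument to compare your attempt against.

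Your overall strategy --- rewrite $(a_1\cdots a_{r-1})^{pq}$ by positive braid relations and then invoke \cref{subbraids} --- is the natural one. The per-factor reduction you propose, however, fails in general. Already for $p=2$, $r=4$ the single factor $\gamma=(a_1a_2a_3)^2$ admits no positive-word representative with four occurrences of $a_1$: its underlying permutation $(1\,3)(2\,4)$ sends every strand across the wall between positions $\{1,2\}$ and $\{3,4\}$, forcing at least two occurrences of $a_2$ in any positive word for $\gamma$; together with the similarly forced occurrence of $a_3$, this leaves at most three $a_1$'s among the six letters. Hence $a_1^4$ is not a subword of any rewrite of $\gamma$, and concatenating $q$ per-factor rewrites cannot yield $a_1^{4q}$.

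You anticipate exactly this obstruction and suggest either working across factor boundaries or using a Hopf-plumbing description (your Betti-number count $(q-1)(r-p)$ is correct, and that route does succeed). But neither alternative is carried out, and the combinatorics you defer is precisely the content of the cited proposition. As written, your proposal correctly identifies the method and honestly flags the gap, but does not close it; for a complete argument, consult the reference.
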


The following lemma helps us dealing with the exceptional cases in the proof of \cref{prop1}:
\begin{lemma}\label{lemma345}
The following lower bounds hold for the quotient $\frac{2\Delta g (T(p,q))}{b_1(T(p,q))}$:
\begin{enumerate}\renewcommand{\theenumi}{\roman{enumi}}
\item For $3 | p$ and $q \geq 10$, the quotient is greater or equal to $8 / 51$.
\item For $4 | p$ and $q \geq 7$,  the quotient is greater or equal to $2 / 11$.
\item For $5 | p$ and $q \geq 6$,  the quotient is greater or equal to $1 / 5$.
\end{enumerate}
\end{lemma}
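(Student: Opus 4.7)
Plan. I would prove \cref{lemma345} one case at a time, for each torus link $T(p, q)$ satisfying the hypothesis of (i), (ii), or (iii), by constructing a suitable incompressible subsurface of $\Sigma(T(p,q))$ whose defect realises the claimed ratio and concluding via \cref{inherit}. Since $b_1(T(p,q)) = (p-1)(q-1)$, each inequality is equivalent to a concrete lower bound on $\Delta g(T(p,q))$.

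The main tool is to combine two different sources of incompressible subsurfaces of $\Sigma(T(p,q))$. Writing $p = ck$ with $c \in \{3,4,5\}$, the first source is \cref{abc}, which exhibits $\Sigma(T(c, kq))$ as an incompressible subsurface of $\Sigma(T(p,q))$. The second is obtained by deleting from the braid word $(a_1\cdots a_{p-1})^q$ of $T(p,q)$ all occurrences of the generators $a_c, a_{2c}, \ldots, a_{(k-1)c}$. Because the generators $a_i, a_j$ commute whenever $|i-j|\geq 2$, after rearrangement the remaining positive braid word factors as
\[
\prod_{j=0}^{k-1}(a_{jc+1}\, a_{jc+2}\cdots a_{jc+c-1})^q,
\]
a split braid whose closure is $\bigsqcup_{j=1}^k T(c,q)$. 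By \cref{subbraids}, this yields a disjoint union $\bigsqcup_{j=1}^k \Sigma(T(c,q))$ as incompressible subsurface of $\Sigma(T(p,q))$, contributing defect $k\cdot\Delta g(T(c,q))$.

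To estimate $\Delta g(T(c, q))$ and $\Delta g(T(c, kq))$, I would use the constructions of \cref{sec2}: \cref{kurzerwurm} for $c = 3$, \cref{tildeX} together with iterations via \cref{plumbX} for $c = 4$, and \cref{langerwurm} for $c = 5$. Each of these embeds a defect-producing braid word as a subword of $(a_1\cdots a_{c-1})^n$ and thereby provides a linear-in-$n$ lower bound on $\Delta g(T(c, n))$. Provided the abc-type subsurface and the $k$ split-off copies can be placed on non-overlapping portions of the braid word, their defect contributions can be summed via \cref{inherit}, and the resulting bound on $\Delta g(T(p,q))$, divided by $b_1(T(p,q))/2$, yields the required ratios $8/51$, $2/11$, $1/5$ above the stated thresholds $q \geq 10$, $q \geq 7$, $q \geq 6$, respectively.

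The main obstacle is matching the constants uniformly and handling the small-parameter regime: neither the $T(c, kq)$-contribution nor the $k$-fold-split contribution alone is enough for small $k$ together with $q$ close to its threshold, so the additivity of the two has to be argued by a careful geometric placement of the subsurfaces. For the remaining finite list of genuinely small torus links at the boundary of the allowed range, I would fall back on the randomised algorithm of \cref{comp} to produce the required Alexander-trivial subgroups by direct computation.
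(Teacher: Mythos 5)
Your toolkit is the right one (\cref{abc}, \cref{subbraids}, \cref{inherit}, computed defects), but the step your argument hinges on fails. You want to add the defect of the subsurface $\Sigma(T(c,kq))$ provided by \cref{abc} to the defect $k\cdot\Delta g(T(c,q))$ of the split union obtained by deleting the generators $a_c,a_{2c},\dots,a_{(k-1)c}$. These two subsurfaces can never be ``placed on non-overlapping portions'': their first Betti numbers sum to $(c-1)(kq-1)+k(c-1)(q-1)=(c-1)(2kq-k-1)$, which strictly exceeds $b_1(T(ck,q))=(ck-1)(q-1)$ for every $c\in\{3,4,5\}$ and all $k\ge 1$, so they cannot be homologically independent inside $\Sigma(T(p,q))$; \cref{inherit} lets you cut out one incompressible (possibly split) subsurface, and defects of overlapping subsurfaces do not add. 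This summation is not a dispensable convenience in your plan, because the purely constructive inputs you rely on are quantitatively too weak on their own: \cref{kurzerwurm} yields defect roughly $N/6$ for $T(3,N)$ (a defect of $i$ needs about $(a_1a_2)^{6i+1}$), whereas the ratio $8/51$ requires defect about $4N/17$ after the reduction $\Delta g(T(3a,q))\ge\Delta g(T(3,aq))$; the analogous gap occurs for $c=4$ with \cref{plumbX} and for $c=5$ with \cref{langerwurm} (whose braids $\Omega_5^{4+7j}$ embed too inefficiently into $(a_1a_2a_3a_4)^n$ to reach $1/5$).

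The paper's proof combines your two ideas sequentially rather than in parallel, which is the legitimate version: apply \cref{abc} once to pass from $T(ca,q)$ to $T(c,aq)$, then chop the $c$-strand braid word periodically, writing $aq=17k+r$ (resp.\ $11k+r$, $8k+r$) and using as building blocks the \emph{computer-computed} defects $\Delta g(T(3,7))=1$, $\Delta g(T(3,10))=2$, $\Delta g(T(3,13))=3$, $\Delta g(T(3,17))=4$ (and the analogous values for $T(4,\cdot)$, $T(5,\cdot)$) from \cref{table64}. Note that these computed values enter the main term for \emph{all} $q$, not merely a finite list of boundary cases as in your fallback; the remaining work is elementary bookkeeping, an AM--GM argument reducing the desired inequality to $b_1>64$ (resp.\ $>49$, $>36$), and \cref{table64} for the finitely many smaller links. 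To repair your proposal, drop the summation of overlapping subsurfaces and replace \cref{kurzerwurm}/\cref{plumbX}/\cref{langerwurm} by the computed defects of those specific small torus knots; that is essentially the paper's argument.
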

\begin{table}[p]%
\captionsetup{width=\textwidth}
\newcommand{\computer}{\cref{comp}}
\small
\noindent\begin{tabu}{*{4}{|[lightgray]l}||*{4}{l|[lightgray]}} \tabucline[lightgray]{1-8}
$b_1$ & $(p,q)$ & $\Delta g$ & Lower bound       & $b_1$ & $(p,q)$ & $\Delta g$ & Lower bound \\\hline
4   & (3,3)   & 0        &                       & 40  & (5,11)  & [5,6]    & \computer                 \\
6   & (3,4)   & 0        &                       & 40  & (6,9)   & [4,6]    &                           \\
8   & (3,5)   & 0        &                       & 42  & (3,22)  & [4,6]    &                           \\
9   & (4,4)   & 0        &                       & 42  & (4,15)  & [4,6]    &                           \\
10  & (3,6)   & 0        &                       & 42  & (7,8)   & [5,6]    & \computer                 \\
12  & (3,7)   & 1        & \cref{t37}            & 44  & (3,23)  & [5,6]    & $\Sigma (3, 10) \sqcup \Sigma(3, 13)$  \\
12  & (4,5)   & 1        & \cref{t45}            & 44  & (5,12)  & [5,7]    &                           \\
14  & (3,8)   & 1        &                       & 45  & (4,16)  & [5,6]    & $\Sigma (4, 5) \sqcup \Sigma(4, 11)$   \\
15  & (4,6)   & [1,2]    &                       & 45  & (6,10)  & [5,8]    & $\Sigma (6\cdot 2,5)$       \\
16  & (3,9)   & 1        &                       & 46  & (3,24)  & [5,6]    &                           \\
16  & (5,5)   & 1        &                       & 48  & (3,25)  & [5,7]    &                           \\
18  & (3,10)  & 2        & \computer             & 48  & (4,17)  & [5,7]    &                           \\
18  & (4,7)   & 2        & \computer             & 48  & (5,13)  & [5,8]    &                           \\
20  & (3,11)  & 2        &                       & 48  & (7,9)   & [5,8]    &                           \\
20  & (5,6)   & 2        & $\Sigma (5\cdot 2,3)$ & 49  & (8,8)   & [5,6]    &                           \\
21  & (4,8)   & 2        &                       & 50  & (3,26)  & [6,7]    & $\Sigma (3, 13) \sqcup \Sigma(3, 13)$           \\
22  & (3,12)  & 2        &                       & 50  & (6,11)  & [5,8]    &                           \\
24  & (3,13)  & 3        & \computer             & 51  & (4,18)  & [6,8]    & $\Sigma (4, 7) \sqcup \Sigma(4, 11)$   \\
24  & (4,9)   & 3        & \computer             & 52  & (3,27)  & [6,7]    &                           \\
24  & (5,7)   & 3        & \computer             & 52  & (5,14)  & [6,8]    & $\Sigma (5, 6) \sqcup \Sigma(5, 8)$    \\
25  & (6,6)   & 2        &                       & 54  & (3,28)  & [6,8]    &                           \\
26  & (3,14)  & 3        &                       & 54  & (4,19)  & [6,8]    &                           \\
27  & (4,10)  & [3,4]    &                       & 54  & (7,10)  & [6,9]    & $\Sigma (4, 7) \sqcup \Sigma(6, 7)$    \\
28  & (3,15)  & 3        &                       & 55  & (6,12)  & [6,8]    & $\Sigma (5, 6) \sqcup \Sigma(6, 7)$    \\
28  & (5,8)   & 4        & \computer             & 56  & (3,29)  & [6,8]    &                           \\
30  & (3,16)  & [3,4]    &                       & 56  & (5,15)  & [7,10]    & $\Sigma (5, 7) \sqcup \Sigma(5, 8)$    \\
30  & (4,11)  & 4        & \computer             & 56  & (8,9)   & [6,9]    & $\Sigma (4, 9) \sqcup \Sigma(4, 9)$            \\
30  & (6,7)   & 4        & \computer             & 57  & (4,20)  & [7,8]    & $\Sigma (4, 9) \sqcup \Sigma(4, 11)$   \\
32  & (3,17)  & 4        & \computer             & 58  & (3,30)  & [7,8]    & $\Sigma (3, 13) \sqcup \Sigma(3, 17)$  \\
32  & (5,9)   & 4        &                       & 60  & (3,31)  & [7,9]    &                           \\
33  & (4,12)  & 4        &                       & 60  & (4,21)  & [7,9]    &                           \\
34  & (3,18)  & 4        &                       & 60  & (5,16)  & [8,10]   & $\Sigma (5, 8) \sqcup \Sigma(5, 8)$            \\
35  & (6,8)   & [4,6]    &                       & 60  & (6,13)  & [6,10]   & $\Sigma (3, 13) \sqcup \Sigma(3, 13)$           \\
36  & (3,19)  & [4,5]    &                       & 60  & (7,11)  & [7,10]   & $\Sigma (5, 7) \sqcup \Sigma(6, 7)$    \\
36  & (4,13)  & [4,5]    &                       & 62  & (3,32)  & [7,9]    &                           \\
36  & (5,10)  & 4        &                       & 63  & (4,22)  & [8,10]    & $\Sigma (4, 11) \sqcup \Sigma(4, 11)$           \\
36  & (7,7)   & [4,6]    &                       & 63  & (8,10)  & [8,12]   & $\Sigma (5, 8) \sqcup \Sigma(5, 8)$            \\
38  & (3,20)  & [4,5]    &                       & 64  & (3,33)  & [7,9]    &                           \\
39  & (4,14)  & [4,6]    &                       & 64  & (5,17)  & [8,11]   &                           \\
40  & (3,21)  & [4,5]    &                       & 64  & (9,9)   & [7,9]    & $\Sigma (4, 9) \sqcup \Sigma(5, 9)$    \\
\tabucline[lightgray]{1-8}
\end{tabu}
\medskip
\caption{\small All $(p,q)$-torus links with $p,q\geq 3$ up to Betti number $b_1 \leq 64$,
    including all links of genus $g \leq 28$.
    The upper bounds for the genus defect $\Delta g$ are induced by the signature and nullity functions.
    For the lower bounds, there is either a reference given, or an incompressible
    subsurface from which the defect is inherited (see \cref{inherit,abc}).
    Subsurfaces of the kind $\Sigma(p-r,q) \subset \Sigma(p,q)$ are left out.}
\label{table64}
\end{table}
\begin{proof}
To prove (i), let $p = 3a$. By \cref{abc}, we have
\[
\Delta g (T(p,q)) \geq \Delta g (T(3,aq)).
\]
Let $aq = 17k + r$ with $0 \leq r \leq 16$. Applying the computed defects shown in \cref{table64} of
the knots $T(3,7),T(3,10),T(3,13)$ and $T(3,17)$ yields
\[
\Delta g (T(3,aq)) \geq 4k + s(r),
\]
where $s(r) = 0, 1, 2, 3$ for $r$ in $[0,6],[7,9], [10,12], [13,16]$, respectively.
For
\begin{align*}
\frac{2\Delta g (T(3a,q))}{b_1(T(3a,q))} & \geq \frac{8}{51},\\
\intertext{it suffices that}
4k + s(r)            & \geq \frac{4}{51}(3a-1)(q-1) \quad\Leftrightarrow\\
51k + 51s(r)/4       & \geq 3aq - 3a - q + 1        \quad\Leftrightarrow\\
51k + 51s(r)/4       & \geq 51k + 3r - 3a - q + 1   \quad\Leftrightarrow\\
3a + q               & \geq 1 + 3r - 51s(r)/4       \quad\Leftarrow\\
\intertext{(to find the maximum of the right-hand side, which is at $r = 6$,
        it suffices to check the cases $r = 6,9,12,16$)}
3a + q               & \geq 19                      \quad\Leftrightarrow\\
\frac{(3a-1) + (q-1)}{2} & > 8.                        \\
\intertext{Since the arithmetic dominates the geometric mean, this is implied by}
\sqrt{(3a-1)(q-1)}   & > 8                          \quad\Leftrightarrow\\
\sqrt{b_1(T(3a,q))}  & > 8                          \quad\Leftrightarrow\\
b_1(T(3a,q))         & > 64.
\end{align*}
The case $b_1(T(3a, q)) \leq 64$ is dealt with by \cref{table64}.
The proofs of (ii) and (iii) proceed in the same way.
For (ii), let $aq = 11k + r$, and use the computed defects of $T(4,5),T(4,7),T(4,9)$ and $T(4,11)$.
This covers the case $b_1(T(4a,q)) > 49$.
For (iii), setting $aq = 8k + r$ and using $T(5,4),T(5,6),T(5,7), T(5,8)$ covers
the case $b_1(T(5a,q)) > 36$. 
\end{proof}
\begin{proof}[Proof of \cref{prop1}]
The cases $p, q\leq 9$ are all contained in \cref{table64}. So let us assume $q \geq 10$.
We will prove that in this case we even have $2\Delta g/b_1 \geq 1/7$, which suffices since $b_1 \geq 2g$.
If $p$ is divisible by $3$, $4$ or $5$, then the statement follows from \cref{lemma345}.
All other $p$ can be written as $p = 3a + 4b$ with $a, b \geq 1$.  By \cref{lemma345},
\begin{align*}
2\Delta g (T(3a + 4b, q)) & \geq 2\Delta g (T(3a,q)) + 2\Delta g (T(4b,q)) \\
                        & \geq \frac{8(3a-1)(q-1)}{51} + \frac{2(4b-1)(q-1)}{11}.
\end{align*}
So now it suffices to show
\begin{align*}
\frac{8(3a-1)(q-1)}{51} + \frac{2(4b-1)(q-1)}{11} & \geq \frac{(3a + 4b - 1)(q-1)}{7} \quad\Leftrightarrow \\
616(3a-1) + 714(4b-1)     & \geq 561(3a + 4b - 1) \quad\Leftrightarrow \\
1848a - 616 + 2856b - 714 & \geq 1683a + 2244b - 561 \quad\Leftrightarrow \\
165a + 612b                 & \geq 769,
\end{align*}
which follows from $a, b\geq 1$.
\end{proof}

\bibliographystyle{myamsalpha}
\bibliography{torus_def}
{\footnotesize

\bigskip
\textsc{Universit\"at Bern, Mathematisches Institut, Sidlerstrasse 5, 3012 Bern, Switzerland}

\myemail{sebastian.baader@math.unibe.ch}

\myemail{lukas.lewark@math.unibe.ch}

\myemail{livio.liechti@math.unibe.ch}

\bigskip
\textsc{Max Planck Institute for Mathematics, Vivatsgasse 7, 53111 Bonn, Germany}

\myemail{peter.feller@math.ch}
}
\end{document}